\theoremstyle{plain}
\newtheorem{theo}{Theorem}[section]
\newtheorem{cor}[theo]{Corollary}
\newtheorem{rem}[theo]{Remark}
\newtheorem{defi}[theo]{Definition}
\newtheorem{lemma}[theo]{Lemma}
\newtheorem{prop}[theo]{Proposition}
\newtheorem{ex}[theo]{Example}
\newtheorem{op}[theo]{Open Problem}
\algnewcommand{\IfOneRow}[1]{\State\algorithmicif\ #1,}
\algnewcommand{\EndifOneRow}{}
\renewcommand{\ALG@name}{Algorithm}
\numberwithin{theo}{section}
\numberwithin{equation}{section}
\numberwithin{table}{section}
\numberwithin{figure}{section}
\DeclareFixedFont{\ttb}{T1}{txtt}{bx}{n}{12} 
\DeclareFixedFont{\ttm}{T1}{txtt}{m}{n}{12}  
\definecolor{deepblue}{rgb}{0,0,0.5}
\definecolor{deepred}{rgb}{0.6,0,0}
\definecolor{deepgreen}{rgb}{0,0.5,0}
\newcommand\pythonstyle{\lstset{
language=Python,
basicstyle=\ttm,
morekeywords={self},              
keywordstyle=\ttb\color{deepblue},
emph={MyClass,__init__},          
emphstyle=\ttb\color{deepred},    
stringstyle=\color{deepgreen},
frame=tb,                         
showstringspaces=false
}}
\newcommand\pythoninline[1]{{\pythonstyle\lstinline!#1!}}
\newcommand{\mb}[1]{#1}
\newcommand{\vF}[1]{\mathbb{F}_2^{#1}}
\def\cD{{\mathcal D}}
\def\cM{{\mathcal{M}}}
\newcommand{\B}{\mathcal{B}}
\newcommand{\F}{\mathbb{F}}
\newenvironment{polynomial}
{\par\vspace{\abovedisplayskip}%
	\setlength{\leftskip}{\parindent}%
	\setlength{\rightskip}{\leftskip}%
	\medmuskip=4mu plus 2mu minus 2mu
	\binoppenalty=0
	\noindent$\displaystyle}
{$\par\vspace{\belowdisplayskip}}
\renewcommand*{\backref}[1]{}
\renewcommand*{\backrefalt}[4]{%
	\ifcase #1 (Not cited.)%
	\or        (Cited on page~#2.)%
	\else      (Cited on pages~#2.)%
	\fi}
\title{Permutations satisfying $(P_1)$ and $(P_2)$  properties\\ and  $\ell$-optimal bent functions}
\author{\Large Sadmir Kudin$^1$, Enes Pasalic$^1$, Alexandr Polujan$^2$,
	 Fengrong Zhang$^{3}$\vspace{0.4cm} \\
	\small $^1$ University of Primorska, FAMNIT \& IAM, Glagolja\v{s}ka 8, 6000 Koper, Slovenia\ \\ \small \{\tt  sadmir.kudin@iam.upr.si, \tt enes.pasalic6@gmail.com\}\vspace{0.4cm}\\
	\small$^2$ Otto-von-Guericke-Universit\"{a}t, Universit\"{a}tsplatz 2, 39106, Magdeburg, Germany\ \\ \small \tt \{alexandr.polujan@gmail.com,alexandr.polujan@ovgu.de\} \vspace{0.4cm}\\
	\small	$^3$ School of Cyber Engineering, Xidian University, Xi'an 710071, China \ \\
	 \small \tt zhfl203@163.com
}
\date{}
\begin{document}
	
\maketitle
\begin{abstract}
	 An important classification of permutations over $\F_2^m$, suitable for constructing Maiorana-McFarland bent functions on $\F_2^m \times \F_2^m$ with the unique $\mathcal{M}$-subspace of maximal dimension, was recently considered in~\cite{PPKZ2023}. More precisely, two properties called $(P_1)$ and $(P_2)$ were introduced and a generic method of constructing permutations having the property $(P_1)$ was presented, whereas no such results were provided related to the $(P_2)$ property. In this  article, we provide a deeper insight on these properties, their mutual relationship, and specify some explicit classes of permutations having these properties. Such permutations are then employed to generate a large variety of bent functions outside the completed Maiorana-McFarland class $\cM^\#$.
	 We also introduce $\ell$-optimal bent functions as bent functions with the lowest possible linearity index; such functions can be considered as opposite to Maiorana-McFarland bent functions. We give explicit constructions of $\ell$-optimal bent functions within the $\cD_0$ class, which in turn can be employed in certain secondary constructions of bent functions~\cite{ZPBW23} for providing even more classes of bent functions that are provably outside $\cM^\#$. Moreover, we demonstrate that a certain subclass of $\cD_0$ has an additional property of having only 5-valued spectra decompositions, similarly to the only result in this direction concerning monomial bent functions~\cite{Decom}. Finally, we generalize the so-called ``swapping variables'' method introduced in \cite{PPKZ2023} which then allows us to specify much larger families of bent functions outside $\cM^\#$ compared to \cite{PPKZ2023}. In this way, we give a better explanation of the origin of bent functions in dimension eight, since the vast majority of them is outside $\mathcal{M}^\#$,  as indicated in~\cite{LangevinL11}. 
	\end{abstract}
	
	  \textbf{Keywords.} Bent function, Maiorana-McFarland class, Permutation, Bent 4-concatenation, Equivalence
\section{Introduction}
Bent functions, introduced by Rothaus in the mid-1960s~\cite{Rot}, are well-known combinatorial objects that play an important role in the construction of various discrete structures, including difference sets, combinatorial designs, and strongly regular graphs~\cite{Carlet2021,Dillon,Mesnager}. Thanks to their exceptional differential properties and perfect nonlinearity, bent functions found many applications in cryptography~\cite{Carlet2021,Mesnager}. For example, the cryptographic hash function HAVAL utilizes Boolean functions derived from bent functions in six variables~\cite{ZPS1993}. Additionally, some components of the block ciphers CAST-128 and CAST-256 were designed using the CAST design procedure~\cite{Adams1997}, which also incorporates bent functions. Moreover, they play an important role in the  design of BISON  (for Bent whItened Swap Or Not) -- the first practical instance of the Whitened Swap-Or-Not construction~\cite{Bison}.

Probably the most important class of bent functions is the Maiorana-McFarland class~\cite{MM73} $\mathcal{M}$, i.e., the set of bent functions on $\F_2^m\times\F_2^m$ of the form  $f(x,y)=x \cdot \pi(y) + h(y)$ , where $\pi$ is a permutation of $\F_2^m$ and $h$ is an arbitrary Boolean function on $\F_2^m$. Due to the flexibility of the choice of a permutation $\pi$ and a Boolean function $h$ on $\F_2^n$, one can design bent functions with  desired algebraic properties~\cite{CarletMesnager2016}. The primary cryptographic disadvantage of this construction is that any Maiorana-McFarland bent function can be expressed as a concatenation of $2^m$ affine functions over $\F_2^m$, which can be exploited in attacks~\cite[p. 295]{Carlet2021}. Since this property is invariant under equivalence, there is an essential necessity to construct bent functions outside the completed Maiorana-McFarland class~$\mathcal{M}^\#$, i.e., the set of all bent functions that are EA-equivalent to those in the $\mathcal{M}$ class. For the recent works on this subject, we refer to~\cite{Cclass_DCC,OutsideMM,BapicPasalic,Kudin2021,Kudin2022,ISIT,ISIT_FULL,Bent_Decomp2022,PPKZ2023,PPKZ_CCDS}.

 Dillon, in his thesis~\cite{Dillon}, proved that a given bent function $f\colon\F_2^n \to \F_2$ (where $n$ is necessarily even) belongs to the $\cM^\#$ class if and only if $D_aD_b f(x)=0$ for all $a,b\in V$ (and for all $x \in \F_2^n$), for at least one $n/2$-dimensional vector subspace $V$ of $\F_2^n$ (see also Lemma~\ref{lem M-M second} for details). Vector subspaces $V$ of $\F_2^n$ such that for any two elements $a,b \in V$ the second-order derivative $D_aD_bf$ is the zero function on $\F_2^{n}$, were called \textit{$\mathcal{M}$-subspaces} in~\cite{Polujan2020}.  The algebraic properties of $\mathcal{M}$-subspaces attracted more attention only recently in a few works, e.g., in~\cite{Polujan2020,PolujanPhD,PPKZ2023}. In this article, we further develop the theory of $\mathcal{M}$-subspaces, as a highlight, we provide generic constructions of bent functions having only trivial $\mathcal{M}$-subspaces, i.e., those of dimension at most one. Such functions can be seen as the opposite of Maiorana-McFarland functions, since they can not be represented as a concatenation of affine functions defined on a ``large'' vector space. The constructions of such functions are very limited, and the known examples stem from monomial bent functions~\cite{Decom} thanks to their strong  multiplicative properties. We, on the other hand, provide many such examples employing additive properties of the involved building blocks. 
 
The first main aim of this article is to provide further analysis of special classes of permutations on $\F_2^m$ that give rise to bent functions on $\F_2^m \times \F_2^m$ in the $\cM$ class, which have a unique $m$-dimensional $\mathcal{M}$-subspace $\F_2^{m} \times \{0_{m}\}$. Such functions were recently~\cite{PPKZ2023} shown to be important primitives in the design of bent functions outside $\mathcal{M}^\#$ using the concatenation~\cite{Decom} $f=f_1||f_2||f_3||f_4$ of four functions $f_1,f_2,f_3,f_4$ on $\F_2^n$. Recently, it was shown that the Maiorana-McFarland bent functions $f(x,y)=x \cdot \pi(y) + h(y)$ on $\F_2^m\times\F_2^m$, with the unique $\mathcal{M}$-subspace of dimension $m$, can be constructed from a permutation $\pi$ of $\F_2^m$ satisfying the so-called \eqref{eq: P1} and \eqref{eq: P2} properties. Whereas the property \eqref{eq: P1} means that $D_aD_b \pi$ is not the constant zero function, for any linearly independent  $a, b \in \F_2^m$, the property  \eqref{eq: P2} appears to be less strict since the maximal dimension of a subspace $S$ for which  $D_aD_b \pi=0_m$, for all $a,b\in S$, is at most $m-1$; however in this  latter  case an additional condition must be satisfied. It was already noticed in \cite{PPKZ2023}  that 34 equivalence classes of quadratic permutations on $\F_2^5$ (out of 75 in total) satisfy the property~\eqref{eq: P2}, while only two of them have \eqref{eq: P1}. In this context, we provide a generic method of preserving the property \eqref{eq: P2} on larger variable spaces which significantly increases the cardinality of bent functions in $\cM$ that admit a unique $\mathcal{M}$-subspace of maximal dimension. Moreover, we formally show that the property \eqref{eq: P1} implies \eqref{eq: P2} and simplify the sufficient conditions related to the latter property.
  
The second main aim of this article is to provide  constructions of bent functions  $f\in\mathcal{B}_n$ with maximal dimension of $\mathcal{M}$-subspaces being equal to one. Such functions can be considered as opposite to the Maiorana-McFarland functions on $\F_2^n$, which always posses at least one $\mathcal{M}$-subspace of the maximal possible dimension $n/2$. Due to the recent results based on the analysis of $\mathcal{M}$-subspaces \cite{PPKZ2023,BFAExtended,ISIT,ISIT_FULL}, the design methods of bent functions outside $\cM^\#$ (equivalently having the linearity index less than $m$ for a bent function on $\F_2^{2m}$) are quite efficient without any complicated conditions to be satisfied. However, a little is known about the constructions of bent functions outside $\cM^\#$ with a prescribed maximal dimension of $\mathcal{M}$-subspaces. In the extreme case, the linearity index of a bent function $f$ equals to one (implying that $D_aD_b f =0$ only if $\dim(\langle a, b \rangle)=1$) which was initially considered in~\cite[Lemma 3]{Decom}. We call such bent functions $\ell$-optimal and show that such bent functions can be specified within the $\cD_0$ class, whose members are of the form $f(x,y)=x \cdot \pi(y) + \delta_0(x)$ (where $\delta_0(x)=\prod_{i=1}^m(x_i+1)$) for certain permutations $\pi$ over $\F_2^m$. More precisely,  to obtain $\ell$-optimality the permutation $\pi$  must satisfy  the \eqref{eq: P1} property and moreover its components do not admit linear structures. Such permutations can be identified among certain non-quadratic monomial mappings (notice that the APN permutations always satisfy \eqref{eq: P1}). However, specifying  other construction methods of such permutations is  left as an open problem. We also note that $\ell$-optimal bent functions might also have an additional property of having only 5-valued spectra decompositions and one such class is identified, see Theorem \ref{theo:5valueddecomp}. Actually, we demonstrate  that the linearity index of $f$ and  the dual bent function $f^*$ are not necessarily the same, which also implies that $\ell$-optimality of $f$ does not necessarily induce the property of having  5-valued spectra decompositions only. 
  
  Additionally, we consider the so-called ``swapping variables'' approach considered in \cite{PPKZ2023} for the purpose of specifying efficient designs of bent functions outside $\cM^\#$ using bent functions $f_i$  in $\cM$, when the concatenation of the form $f=f_1||f_2||f_3||f_4$ is considered. We provide a generalization of this method, see Theorem \ref{theo bent and q bent} and Corollary \ref{cor:swapping}, which significantly extends the cardinality of families of bent functions outside $\cM^\#$.  This approach, using bent functions $f_i$  in $\cM$, is currently the most efficient design for specifying bent functions outside $\cM^\#$ on $\F_2^8$ which then contributes to our better understanding of the origin of bent functions for this particular dimension of the ambient space. 
  
  The rest of this article is organized as follows. In Section \ref{sec:pre}, we provide some relevant notations and definitions related to Boolean and bent functions in particular. In Subsection~\ref{sub: 2.1 MM}, we summarize some results on the Maiorana-McFarland bent functions and $\mathcal{M}$-subspaces, while in Subsection~\ref{sub: bent-4 cat and its prop} we consider decompositions and concatenations of bent functions. In Section \ref{sec: 3 refining}, we consider in detail the relationship between the properties \eqref{eq: P1} and \eqref{eq: P2} and we address further refinement of the latter property. In Section~\ref{sec: 4 P2 property}, we give a construction of permutations with the \eqref{eq: P2} property, thus providing a solution to~\cite[Open Problem 1]{PPKZ2023}. In Section~\ref{sec: 5 swapping}, a generalization of the ``swapping variables'' method is proposed along with the related design of bent functions outside $\cM^\#$. In Section \ref{sec: 6 loptimal}, we introduce the notion of $\ell$-optimal bent functions. In Subsection~\ref{sub: 6.1. l-optimality and D0}, we identify a certain class of $\ell$-optimal bent functions and in Subsection~\ref{sub: 6.2. 4-deom and D0} we consider in more detail those that have only 5-valued spectra decompositions. The paper is concluded in Section \ref{sec: 7 concl}.

\section{Preliminaries}\label{sec:pre}
Let $\mathbb{F}_2^n$ be the vector space of dimension $n$ over $\F_2$.  For $x=(x_1,\ldots,x_n)$ and $y=(y_1,\ldots,y_n)$ in $\mathbb{F}^n_2$, the scalar product over $\mathbb{F}_2$ is defined as $x\cdot y=x_1 y_1 + \cdots +  x_n y_n$. The Hamming weight of $x=(x_1,\ldots,x_n)\in \mathbb{F}^n_2$ is defined by $\operatorname{wt}(x)=\sum^n_{i=1} x_i$ and the all-zero vector with $n$ coordinates is denoted by $0_n=(0,0,\ldots,0)\in \mathbb{F}^n_2$. In certain cases, we equip $\F_2^n$ with the structure of the finite field $\left(\F_{2^{n}}, +, \cdot \right)$. In this case, the absolute trace of an element $x\in\F_{2^n}$ is given by $Tr(x) =\sum_{i=0}^{n-1} x^{2^{i}}$.

A Boolean function $f$ in $n$ variables is a mapping $f\colon\F_2^n\to\F_2$. The set of all Boolean functions in $n$ variables is denoted by $\mathcal{B}_n$.  Any Boolean function $f\in\mathcal{B}_n$ can be uniquely represented by the algebraic normal form (ANF) given by $f(x_1,\ldots,x_n)=\sum_{u\in \mathbb{F}^n_2}{\lambda_u}{(\prod_{i=1}^n{x_i}^{u_i})}$, where $x_i, \lambda_u \in \mathbb{F}_2$ and $u=(u_1, \ldots,u_n)\in \mathbb{F}^n_2$.
The algebraic degree $\deg(f)$ of $f\in\mathcal{B}_n$ is defined as the maximum Hamming weight of $u \in \F_2^n$, for which $\lambda_u \neq 0$ in its ANF. The first-order derivative of a function $f\in\mathcal{B}_n$, in the direction $a \in \F_2^n$, is the mapping $D_{a}f(x)=f(x+a) +  f(x)$. For $a,b\in\F_2^n$, the second-order derivative of a function $f\in\mathcal{B}_n$ is the mapping $D_{a}D_bf(x)=f(x+a+b) + f(x+a) + f(x+b) + f(x)$. 
An element $a\in\F_2^n$ is a linear structure of $f\in\mathcal{B}_n$ if $f(x+a)+f(x)=const.$, for all $x\in\F_2^n$. A function $f\in\mathcal{B}_n$ is said to have no linear structures if $0_n$ is the only linear structure of $f$.

The Walsh-Hadamard transform (WHT) of $f\in\mathcal{B}_n$ at $a\in\mathbb{F}^n_2$  is defined by $W_{f}(a)=\sum_{x\in \mathbb{F}_2^n}(-1)^{f(x) +  a\cdot x}$. Its inverse WHT, is in turn given by $	(-1)^{f(x)}=2^{-n}\sum_{a\in \mathbb{F}_2^n}W_f(a)(-1)^{a\cdot x}$. For even $n$, a function $f\in\mathcal{B}_n$ is called \emph{bent} if $W_f(u)=\pm2^{\frac{n}{2}}$ for all $u\in\F_2^n$. For a bent function $f\in\mathcal{B}_n$, the  Boolean function $f^*\in \mathcal{B}_n$ defined by $W_f(u)=2^{\frac{n}{2}}(-1)^{f^*(u)}$, for all $u\in\F_2^n$, is a bent function, called the \emph{dual} of $f$. 

For $m\ge2$, the mappings $F\colon\F_2^n\to\F_2^m$ are called vectorial functions. Every such function $F\colon\F_2^n\to\F_2^m$ can be uniquely represented as $F(x)=(f_1(x),\ldots,f_m(x))$, where Boolean functions $f_i  \in\mathcal{B}_n$ are called coordinate functions. The ANF of $F\colon\F_2^n\to\F_2^m$ is defined coordinate-wise, and $\deg(F):=\max\{\deg(f_i)\colon F=(f_1,\ldots,f_m)\}$. For $b\in\F_2^m \setminus \{0_m\}$, the component function $F_b\in\mathcal{B}_n$ of $F\colon\F_2^n\to\F_2^m$ is defined by $F_b(x)=b\cdot F(x)$, for all $x\in\F_2^n$. For vectorial functions, the definitions related to differential properties (e.g., derivatives, linear structures) can be essentially introduced by replacing $f\in\mathcal{B}_n$ in the corresponding definitions by $F\colon\F_2^n\to\F_2^m$.   
A function $F\colon\F_2^m\to \F_2^m$ is called \textit{almost perfect nonlinear (APN)} if, for all $a\in\F_2^m\setminus\{0_m\}$, $b\in\F_2^m$, the equation $F(x+a)+F(x)=b$ has 0 or 2 solutions $x\in\F_2^m$. Finally, we note that for the case $n=m$, we will frequently use the univariate representation over finite fields so that $F\colon\F_{2^n} \to \F_{2^n}$ is represented as $F(x)= \sum_i a_i x^i$, where $a_i \in \F_{2^n}$.

Boolean functions $f,f'\in\mathcal{B}_n$ are extended affine (EA) \textit{equivalent} if there exists an affine permutation $A$ of $\F_2^n$ and an affine function $l\in\mathcal{B}_n$ (i.e., $\deg(l)\le1$) such that $f\circ A + l= f'$. It is well-known that the bent property is preserved under extended-affine equivalence. This fact  essentially leads to the following definition. A class of bent functions $\mathit{B}_n \subset \mathcal{B}_n$ is \emph{complete} if it is globally invariant under EA-equivalence. 

Now, we introduce basic definitions and some fundamental results related to the completed Maiorana-McFarland class ($\mathcal{M}^\#$) of bent functions and bent 4-concatenation, which will be required later in the sections related to construction methods of bent functions outside $\mathcal{M}^\#$ using bent 4-concatenation.

\subsection{Maiorana-McFarland bent functions and  $\mathcal{M}$-subspaces}\label{sub: 2.1 MM}

The \textit{Maiorana-McFarland class} $\cM$ is the set of $n$-variable ($n=2m$) Boolean bent
functions of the form
\[
f(x,y)=x \cdot \pi(y)+ h(y), \mbox{ for all } x, y\in\F_2^m,
\]
where $\pi$ is a permutation on $\F_2^m$  and $h$ is an arbitrary Boolean function on $\F_2^m$. The smallest class that contains $\mathcal{M}$, that is globally EA-invariant,  is denoted  by $\mathcal{M}^\#$ and  is called the \textit{completed Maiorana-McFarland class}.
 Using the following criterion, one can analyze whether a given Boolean bent function $f\in\mathcal{B}_n$ belongs to $\mathcal{M}^\#$.
\begin{lemma} \cite[p. 102]{Dillon}\label{lem M-M second}
	Let $n=2m$. A Boolean bent function $f\in\mathcal{B}_n$ belongs to $\cM^{\#}$ if and only if
	there exists an $m$-dimensional linear subspace $V$ of $\F_2^n$ such that, for any $ a,  b \in V$, 
	$$ D_{a}D_{b}f(x)=f(x)+f(x+a)+f(x+b)+f(x+a+b)=0, \textit{ for all }x \in \F_2^n.$$
\end{lemma}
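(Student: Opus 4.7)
The plan is to prove the two directions separately, noting that this is Dillon's classical characterization and the key reduction in both directions is moving the subspace to a standard coordinate form by an affine change of variables.

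For the forward direction, assume $f\in\cM^\#$, so $f = g\circ A + \ell$ where $g(u,v)=u\cdot\pi(v)+h(v)\in\cM$, $A(z)=Lz+c$ is an affine permutation of $\F_2^n$, and $\ell$ is affine. Since $g$ is affine in its first $\F_2^m$-argument, taking $V_0=\F_2^m\times\{0_m\}$ gives $D_aD_b g\equiv 0$ for all $a,b\in V_0$. Now a short computation with derivatives, using that second derivatives of an affine function vanish and that $D_aD_b(g\circ A)(x) = (D_{La}D_{Lb}g)(Lx+c)$, shows that $V := L^{-1}(V_0)$ is an $m$-dimensional subspace of $\F_2^n$ with $D_aD_b f\equiv 0$ for all $a,b\in V$.

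For the reverse direction, let $f\in\cB_n$ be bent and let $V$ be an $m$-dimensional subspace with $D_aD_bf\equiv 0$ for all $a,b\in V$. Choose a linear isomorphism $T$ of $\F_2^n$ mapping $\F_2^m\times\{0_m\}$ onto $V$, and set $\tilde f := f\circ T$. EA-equivalence preserves both bentness and membership in $\cM^\#$, so it suffices to prove $\tilde f\in\cM$. Writing $\tilde f(x,y)$ with $x,y\in\F_2^m$, the hypothesis translates to $D_aD_b \tilde f(x,y)=0$ for all $a,b\in\F_2^m$ and all $(x,y)$, i.e., for every fixed $y$ the map $x\mapsto \tilde f(x,y)$ has all second derivatives vanishing and is therefore affine in $x$. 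Hence there exist $\pi\colon\F_2^m\to\F_2^m$ and $h\colon\F_2^m\to\F_2$ with $\tilde f(x,y)=x\cdot\pi(y)+h(y)$.

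It remains to verify that $\pi$ is a permutation, and this is where bentness enters. A direct computation of the Walsh transform gives
\[
W_{\tilde f}(\alpha,\beta)=2^m\sum_{y\in\pi^{-1}(\alpha)}(-1)^{h(y)+\beta\cdot y},
\]
with the convention that an empty sum is zero. Bentness demands $|W_{\tilde f}(\alpha,\beta)|=2^m$ for every $(\alpha,\beta)$, which forces the inner sum to have absolute value exactly $1$ for all $\alpha,\beta$; since that sum is an integer of absolute value at most $|\pi^{-1}(\alpha)|$ and of the same parity, we must have $|\pi^{-1}(\alpha)|=1$ for every $\alpha$, i.e., $\pi$ is a bijection. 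Therefore $\tilde f\in\cM$ and $f\in\cM^\#$. The only mildly delicate step is the bentness-to-bijectivity argument; the rest is a change of coordinates together with the elementary fact that a Boolean function with all second derivatives vanishing is affine.
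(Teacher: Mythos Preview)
The paper does not supply a proof of this lemma; it is simply cited from Dillon's thesis~\cite[p.~102]{Dillon}. So there is nothing in the paper to compare against, and your proposal should be judged on its own.

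Your argument is essentially the standard one and both directions are set up correctly. The forward direction is fine. In the reverse direction, the reduction to $\tilde f(x,y)=x\cdot\pi(y)+h(y)$ via vanishing second derivatives in the $x$-variable is clean, and the Walsh computation
\[
W_{\tilde f}(\alpha,\beta)=2^m\sum_{y\in\pi^{-1}(\alpha)}(-1)^{h(y)+\beta\cdot y}
\]
is correct. The one imprecision is the last sentence: from ``the inner sum has absolute value~$1$, is bounded by $|\pi^{-1}(\alpha)|$, and has the same parity as $|\pi^{-1}(\alpha)|$'' you only conclude that each $|\pi^{-1}(\alpha)|$ is odd, not that it equals~$1$. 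To finish, either invoke Parseval on the inner sum over all $\beta$ (giving $2^m=2^m|\pi^{-1}(\alpha)|$), or---more simply---observe that each $|\pi^{-1}(\alpha)|\ge 1$ and $\sum_\alpha|\pi^{-1}(\alpha)|=2^m$, which forces every fibre to have size exactly~$1$. With that one line added, the proof is complete.
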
 
Following the terminology in~\cite{Polujan2020}, we introduce the $\mathcal{M}$-subspaces of Boolean (not necessarily bent) functions in the following way.
\begin{defi}
	Let $f\in\mathcal{B}_n$ be a Boolean function. We call a vector subspace $V$ of $\F_2^n$  an $\mathcal{M}$-subspace of $f$, if we have that 	$ D_{a}D_{b}f=0$, for any $ a,  b \in V$. We denote by $\mathcal{MS}_r(f)$ the collection of all $r$-dimensional $\mathcal{M}$-subspaces of the function $f$ and by $\mathcal{MS}(f)$ the collection 
	$\mathcal{MS}(f):=\bigcup\limits_{r=1}^{n} \mathcal{MS}_r(f)$. The \emph{linearity index} $\operatorname{ind}(f)$ of a Boolean function $f\in\mathcal{B}_n$ is the maximal possible dimension of an $\mathcal{M}$-subspace of $f$, i.e., $\operatorname{ind}(f)=\max\limits_{U\in \mathcal{MS}(f)}\dim(U)$.
\end{defi} 
\begin{rem}
	For shortness of notation, we often drop the involved variable in the expression $D_aD_b f=0$, where $f\in\mathcal{B}_n$, $a,b\in\F_2^n$. In such cases, we actually mean that $D_aD_b f(x)=0$, for all $x \in \F_2^n$.
\end{rem}
The linearity index of a Boolean function $f\in\mathcal{B}_n$ is an invariant under EA-equivalence, see~\cite{Carlet2021,PolujanPhD}. Particularly, for a bent function $f\in\mathcal{B}_n$ it holds that $1\le \operatorname{ind}(f)\le n/2$. Bent functions achieving the upper bound with equality are exactly the bent functions in $\cM^\#$ by Lemma~\ref{lem M-M second}. 

In~\cite[Proposition 4.4]{Polujan2020}, it was shown that for a Boolean function $f\in\mathcal{B}_n$ the total number of $\mathcal{M}$-subspaces of a fixed dimension $r$ (that is $|\mathcal{MS}_r(f)|$) is invariant under EA-equivalence.  For every Maiorana-McFarland bent function $f(x,y)=x\cdot\pi(y)+h(y)$ on $\F_2^m\times\F_2^m$, the  subspace $\F_2^{m} \times \{0_m \}$ is an $\mathcal{M}$-subspace of maximal dimension, as observed by Dillon~\cite{Dillon}; it is called the \textit{canonical $\mathcal{M}$-subspace}. Note that in general, this vector space is not necessarily unique. For instance, for a bent function $f\in\mathcal{M}^\#$ on $\F_2^n$ it holds that $1\le|\mathcal{MS}_{n/2}(f)|\le\prod_{i=1}^{n/2} \left(2^i+1\right)$, see~\cite{Polujan2020}. The upper bound is achieved with equality if and only if $f$ is quadratic, see~\cite{PolujanPhD} and~\cite[Theorem 2]{Kolomeec17}. 

\subsection{Decomposing and concatenating bent functions}\label{sub: bent-4 cat and its prop}

Canteaut and Charpin~\cite{Decom} considered the \textit{4-decomposition} $f = (f_1, f_2, f_3, f_4)_V$ of a given bent function $f\in\mathcal{B}_{n+2}$ into four Boolean functions $f_1, \ldots, f_4 \in \mathcal{B}_{n}$ that are defined on the cosets of an $n$-dimensional subspace $V = \langle a, b\rangle^{\perp}$ of  $\F_2^{n+2}$, where for any linear subspace  $S\subset \mathbb{F}^n_2$, its orthogonal complement is defined as 
	$S^\perp=\{x\in \mathbb{F}^n_2: x\cdot y = 0,\textnormal{ for all } y\in S\}$.  Remarkably, for any bent function $f\in\mathcal{B}_{n+2}$ and any $n$-dimensional subspace $V = \langle a, b\rangle^{\perp}$ (where $a,b\in\F_2^{n+2}$ are linearly independent), all functions $f_i\in\mathcal{B}_n$ in the 4-decomposition $f = (f_1, f_2, f_3, f_4)_V$ are simultaneously bent, disjoint spectra semi-bent, or suitable 5-valued spectra functions. More precisely, for any $a \in \F_2^n$, we have $W_{f_i}(a) =\pm 2^{n/2}$, $W_{f_i}(a) \in \{0, \pm 2^{n/2+1}\}$, or $W_{f_i}(a) \in \{0,\pm 2^{n/2}, \pm 2^{n/2+1}\}$, respectively. For $V=\langle (0,\ldots,1,0),(0,\ldots,0,1)\rangle^\perp\subset\F_2^{n+2}$, a given function $f\in\mathcal{B}_{n+2}$ can be reconstructed from four functions $f_1,f_2,f_3,f_4\in\mathcal{B}_n$ using the \textit{4-concatenation} $f=f_1||f_2||f_3||f_4$, whose ANF is given by
\begin{equation}\label{eq:ANF_4conc}
f(x,y_1,y_2)=f_1(x) +  y_1(f_1 +  f_3)(x) +  y_2(f_1 +  f_2)(x) +  y_1y_2(f_1 +  f_2 +  f_3 +  f_4)(x).
\end{equation}
In this way, $f_1(x)=f(x,0,0), f_2(x)=f(x,0,1),f_3(x)=f(x,1,0)$ and $f_4(x)=f(x,1,1)$. When all $f_i \in \B_n$ are bent, we have that $f=f_1||f_2||f_3||f_4 \in \mathcal{B}_{n+2}$ is bent if and only if the \textit{dual bent condition} is satisfied~\cite{SHCF}, i.e., $f^*_1  +  f^*_2  +  f^*_3  +  f^*_4=1$. In this case, we call $f=f_1||f_2||f_3||f_4 \in \mathcal{B}_{n+2}$ a \emph{bent 4-concatenation}. For the recent construction methods of such functions, we refer to~\cite{PPKZ_CCDS}. Finally, we give  the expression of the second-order derivative $D_{a,b}f$ for $f=f_1||f_2||f_3||f_4\in\mathcal{B}_{n+2}$, where  $a=(a',a_1,a_2)$ and $b=(b',b_1,b_2)$ and $a',b' \in \F_2^n$ and $a_i,b_i \in \F_2$ (see~\cite[Eq. (I.2)]{PPKZ2023} for more detail):
\begin{equation}\label{eq:2ndderiv_conc correct}
	\begin{split}
		D_aD_bf(x,y_1,y_2)&=D_{a'}D_{b'}f_1(x)+ y_1D_{a'}D_{b'}f_{13}(x)+ y_2 D_{a'}D_{b'}f_{12}(x)+  y_1y_2D_{a'}D_{b'}f_{1234}(x) \\
		& + a_1D_{b'}f_{13}(x +  a')+  b_1 D_{a'}f_{13}(x +  b') +  a_2D_{b'}f_{12}(x +  a') +  b_2D_{a'}f_{12}(x +  b')  \\
		& +  (a_1y_2 +  a_2y_1 +  a_1a_2)D_{b'}f_{1234}(x +  a') +  (b_1y_2 +  b_2y_1 +  b_1b_2)D_{a'}f_{1234}(x +  b') \\
		 &  +  (a_1b_2 +  b_1a_2)f_{1234}  (x +  a' +  b'). 
	\end{split}
\end{equation}
Here,  the Boolean function $f_{i_1\ldots i_k}\in\mathcal{B}_n$ is defined by $f_{i_1\ldots i_k}:=f_{i_1} +  \cdots  +  f_{i_k}$. This expression together with Lemma~\ref{lem M-M second} will be later used to specify suitable $f_i\in\mathcal{B}_n$, so that $f\in\mathcal{B}_{n+2}$ is a bent function outside $\cM^\#$.

\section{Refining the \eqref{eq: P1} and   \eqref{eq: P2} properties  for permutations over $\F_2^m$}\label{sec: 3 refining}
In~\cite{PPKZ2023}, the authors specified algebraic properties of permutations $\pi$ of $\F_2^m$ which guarantee that a Maiorana-McFarland bent function $f(x,y)=x\cdot \pi(y)  + h(y)\in\mathcal{B}_{2m}$ admits exactly one $m$-dimensional $\mathcal{M}$-subspace. This feature is advantageous from the perspective of constructing bent functions $f=f_1||f_2||f_3||f_4\in\mathcal{B}_{2m+2}$ outside $\cM^\#$ from bent functions $f_i\in\mathcal{B}_{2m}$ inside $\cM^\#$, since in this case it is easier  to ensure that the second-order derivatives of $f$ do not vanish on any $(m+1)$-dimensional subspace of $\F_2^{2m+2}$. It was shown in \cite{PPKZ2023}, that the property of having a unique $\mathcal{M}$-subspace of maximal dimension for $f \in \mathcal{M}$ is directly related to the so-called properties~\eqref{eq: P1} and~\eqref{eq: P2} of a permutation $\pi$, which are defined below. 
\begin{theo}\label{theo: unique P1} \cite{PPKZ2023} Let $\pi$ be a permutation of $\F_2^m$ which has the following property:
	\begin{equation}\label{eq: P1} \tag{$P_1$}
		D_vD_w\pi\neq0_m \mbox{ for all linearly independent } v,w\in\F_2^m.
	\end{equation}
	Define $f\colon\F_2^{m}\times\F_2^{m} \to \F_2$ by $f(x,y)=x \cdot \pi(y) + h(y)$, for all $x,y \in \F_2^{m}$, where $h\colon\F_2^m \to \F_2$ is an arbitrary Boolean function. Then, the following hold:
	\begin{itemize}
		\item[1)] The permutation $\pi$ has no linear structures.
		\item[2)] The vector space $V=\F_2^m \times \{0_m \}$ is the only $m$-dimensional $\mathcal{M}$-subspace of $f$.
	\end{itemize}
\end{theo}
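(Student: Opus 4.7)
The plan is to handle the two claims in cascade: item (1) will follow from a short contradiction against $(P_1)$, and item (2) will reduce to (1) after projecting any hypothetical $m$-dimensional $\mathcal{M}$-subspace of $f$ onto its second block. For (1), I would argue directly by contradiction: if $a\neq 0_m$ were a linear structure of $\pi$, then $D_a\pi$ would be a constant vector, so $D_wD_a\pi\equiv 0_m$ for every $w\in\F_2^m$. Since $m\ge 2$, I can choose $w$ linearly independent from $a$, and this immediately violates $(P_1)$.

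For (2), the main tool is the expansion of the second-order derivative of a Maiorana-McFarland function: a short direct computation (using bilinearity of the inner product and the definitions of $D_a$, $D_b$) shows that for $a=(a_1,a_2), b=(b_1,b_2)\in\F_2^m\times\F_2^m$,
$$D_aD_bf(x,y) = x\cdot D_{a_2}D_{b_2}\pi(y) + a_1\cdot D_{b_2}\pi(y+a_2) + b_1\cdot D_{a_2}\pi(y+b_2) + D_{a_2}D_{b_2}h(y).$$
Setting $a_2=b_2=0_m$ instantly gives that $V=\F_2^m\times\{0_m\}$ is an $\mathcal{M}$-subspace. Now let $U$ be any $m$-dimensional $\mathcal{M}$-subspace of $f$, and let $p_2\colon U\to\F_2^m$ denote projection onto the second block. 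Reading off the coefficient of $x$ in the formula above, if some $a,b\in U$ had $a_2,b_2$ linearly independent, then $D_{a_2}D_{b_2}\pi\equiv 0_m$, contradicting $(P_1)$. Hence $\dim p_2(U)\le 1$, and the easy case $\dim p_2(U)=0$ forces $U\subseteq V$, so $U=V$ by dimension count.

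The only nontrivial case is $\dim p_2(U)=1$, and this is where (1) becomes the decisive lever. Write $W:=U\cap V = W_1\times\{0_m\}$, which has dimension $m-1$, and let $p_2(U)=\{0_m,c\}$ with $c\neq 0_m$. Plugging $a=(a_1,c)\in U\setminus W$ together with an arbitrary $b=(b_1,0_m)\in W$ into the derivative formula collapses all terms except $b_1\cdot D_c\pi(y)$, so the $\mathcal{M}$-subspace condition forces $b_1\cdot D_c\pi(y)=0$ for every $y\in\F_2^m$ and every $b_1\in W_1$. This is equivalent to $D_c\pi(y)\in W_1^\perp$ for all $y$. Since $\dim W_1^\perp=1$ and the bijectivity of $\pi$ guarantees $D_c\pi(y)\neq 0_m$ for every $y$ when $c\neq 0_m$, the derivative $D_c\pi$ must be constantly the unique nonzero element of $W_1^\perp$, making $c$ a linear structure of $\pi$ and contradicting (1).

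I do not anticipate any major obstacle. The mildly delicate step is the very last one: seeing that the combination of the permutation property of $\pi$ (ruling out $D_c\pi(y)=0_m$ pointwise) with the one-dimensionality of $W_1^\perp$ is exactly what promotes the pointwise containment $D_c\pi(y)\in W_1^\perp$ into the constancy of $D_c\pi$. Recognising that these two ingredients have to be used together is what makes the reduction of Case~2 to (1) work cleanly.
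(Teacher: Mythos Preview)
Your argument is correct. Note, however, that this theorem is not proved in the present paper: it is quoted from~\cite{PPKZ2023}, so there is no ``paper's own proof'' to compare against here. That said, the second-order derivative formula you derive is exactly Eq.~\eqref{eq: secder2} used later in the proof of Theorem~\ref{theo: MofDzero}, and your projection-and-case-split strategy (bounding $\dim p_2(U)\le 1$ via $(P_1)$, then eliminating $\dim p_2(U)=1$ by forcing a linear structure) is the standard route and matches the spirit of the arguments in~\cite{PPKZ2023} and in this paper. The key observation you flag as ``mildly delicate'' --- combining $D_c\pi(y)\in W_1^\perp$ with $\dim W_1^\perp=1$ and the injectivity of $\pi$ to conclude constancy --- is indeed the crux, and you handle it cleanly.
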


\begin{defi}\label{defi:p2}\cite{PPKZ2023}
	Let $\pi$ be a permutation of $\F_2^m$. Let $S$ be a subspace of $\F_2^m$ of dimension $m-k$, with $1 \leq k  \leq m-1$, such that $D_aD_b\pi=0_m$ for all $a,b \in S$. Then, $\pi$ satisfies the property ($P_2$) with respect to $S$ if there does not exist a vector subspace $V$ of $\F_2^m$  with $\dim(V) = k$ such that 
	\begin{equation}\label{eq: P2} \tag{$P_2$}
		v \cdot D_a\pi(y) = 0;  \textnormal{ for all } a \in S, \; \textnormal{ all } y \in \F_2^m  \; \textnormal{, and  for all } v \in V.
			\end{equation}
	If $\pi$ satisfies this property  with respect to any linear subspace $S$ of $\F_2^m$ of arbitrary   dimension $1 \leq \dim(S)  \leq m-1$, then we simply say that $\pi$  satisfies \eqref{eq: P2}.  
\end{defi}
\begin{rem}\label{rem:P2andlinstruc}
	Notice that the \eqref{eq: P2} property implies that $\pi$ has no linear structures. Indeed, assume that $\pi$ has a nonzero linear structure $a\in\F_2^m$, i.e., for some $z\in\F_2^m$ it holds that $D_{a}\pi(y)=z$, for all $y\in\F_2^m$. Set $V= \langle z \rangle^{\perp}.$ Then, $\dim(V)=m-1$ and 
	$$
	v \cdot D_a\pi(y) = v \cdot z =  0;  \textnormal{ for all } y \in \F_2^m  \; \textnormal{, and  for all } v \in V,
	$$
	hence $\pi$ does not satisfy the property~\eqref{eq: P2} with respect to the subspace $S=\langle a \rangle$ (the condition $D_aD_b\pi=0_m$, for all $a,b \in S$, is trivially satisfied).
\end{rem}
 
The following result gives the equivalence between the \eqref{eq: P2} property and the uniqueness of $m$-dimensional $\mathcal{M}$-subspace of $f(x,y)=x \cdot \pi(y)$. 
\begin{prop}\label{prop:suffcondunique}	\cite{PPKZ2023}
	Let $\pi$  be a non-affine permutation of $\F_2^m$ and $f(x,y)= x \cdot \pi(y)$ be a bent function on $\F_2^m \times \F_2^m$ in $\cM$.
	Then, the permutation $\pi$ has the property \eqref{eq: P2} if and only if the only $m$-dimensional $\mathcal{M}$-subspace of $f$ is $\F_2^m \times \{0_m\}$.
\end{prop}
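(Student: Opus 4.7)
The plan is to start by computing the second-order derivative $D_aD_bf(x,y)$ for $a=(a_1,a_2)$ and $b=(b_1,b_2)$ in $\F_2^m\times\F_2^m$. From $f(x,y)=x\cdot\pi(y)$, a direct calculation yields
$$D_aD_bf(x,y)=x\cdot D_{a_2}D_{b_2}\pi(y)+a_1\cdot D_{b_2}\pi(y+a_2)+b_1\cdot D_{a_2}\pi(y+b_2).$$
Hence $D_aD_bf\equiv 0$ if and only if (i) $D_{a_2}D_{b_2}\pi=0_m$ (from the coefficient of $x$) and (ii) $a_1\cdot D_{b_2}\pi(y+a_2)+b_1\cdot D_{a_2}\pi(y+b_2)=0$ for all $y\in\F_2^m$. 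Given (i), the derivatives $D_{a_2}\pi$ and $D_{b_2}\pi$ are invariant under shifts by $b_2$ and $a_2$ respectively, so (ii) simplifies to $a_1\cdot D_{b_2}\pi(y)+b_1\cdot D_{a_2}\pi(y)=0$ for all $y$. This pair of conditions is the key tool for both implications.

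For the ``$\Leftarrow$'' direction, suppose $\pi$ does not satisfy \eqref{eq: P2}. Then there exist $S\subseteq\F_2^m$ with $\dim(S)=m-k$, $1\le k\le m-1$, and $D_aD_b\pi=0_m$ on $S$, together with $V\subseteq\F_2^m$ with $\dim(V)=k$ such that $v\cdot D_a\pi(y)=0$ for all $v\in V$, $a\in S$, $y\in\F_2^m$. Define $U:=V\times S\subseteq\F_2^m\times\F_2^m$, which has dimension $m$. For any $u=(v,s)$, $u'=(v',s')\in U$, condition (i) holds since $s,s'\in S$, and condition (ii) reduces to $v\cdot D_{s'}\pi(y)+v'\cdot D_{s}\pi(y)=0$, which holds by the hypothesis on $V$. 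Thus $U$ is an $m$-dimensional $\mathcal{M}$-subspace of $f$, and since $S\neq\{0_m\}$ one has $U\neq\F_2^m\times\{0_m\}$, contradicting uniqueness.

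For the ``$\Rightarrow$'' direction, suppose $U\neq\F_2^m\times\{0_m\}$ is an $m$-dimensional $\mathcal{M}$-subspace, and let $S$ be the projection of $U$ onto the second coordinate, with $k_0:=\dim(S)\ge1$. If $k_0=m$, then $U$ is the graph of a linear map $\F_2^m\to\F_2^m$, and applying (i) to arbitrary $u,u'\in U$ forces $D_sD_t\pi=0_m$ for all $s,t\in\F_2^m$, i.e.\ $\pi$ is affine, contradicting the hypothesis on $\pi$. If $1\le k_0\le m-1$, set $W\times\{0_m\}:=U\cap(\F_2^m\times\{0_m\})$, which has dimension $m-k_0$. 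Applying (ii) to pairs $a=(w,0_m)\in W\times\{0_m\}$ and any $b=(b_1,s)\in U$ with $s\in S$ yields $w\cdot D_s\pi(y)=0$ for all $y$, while (i) applied inside $U$ gives $D_sD_t\pi=0_m$ for all $s,t\in S$. Taking $V=W$ (of dimension $m-k_0$) and using $S$ (of dimension $k_0$), this is precisely a violation of~\eqref{eq: P2} with respect to $S$, contradicting the assumption.

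The main technical step is the derivation and post-processing of (i) and (ii), especially recognizing that the shifts $y+a_2$ and $y+b_2$ become inessential once (i) is imposed. The only place where the non-affineness hypothesis on $\pi$ is genuinely used is the sub-case $k_0=m$ in the forward direction; without it, a graph-type $m$-dimensional $\mathcal{M}$-subspace could coexist with the canonical one and the claim would fail.
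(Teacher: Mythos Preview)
Your argument is correct. The derivative formula, the reduction of condition~(ii) via~(i), the construction $U=V\times S$ in the backward direction, and the case split on $k_0=\dim(S)$ in the forward direction (with the non-affine hypothesis used precisely to rule out $k_0=m$) are all sound, and the dimension count $\dim(W)=m-k_0$ matches exactly what Definition~\ref{defi:p2} requires for a violation of~\eqref{eq: P2} with respect to a subspace $S$ of dimension $k_0$.

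Note, however, that the present paper does not itself prove Proposition~\ref{prop:suffcondunique}: it is quoted from~\cite{PPKZ2023} without proof, so there is no in-paper argument to compare against. Your proof is the natural one, built on exactly the second-order derivative expression that the paper also uses (cf.\ Eq.~\eqref{eq: secder2} in the proof of Theorem~\ref{theo: MofDzero}, specialized to $h=0$), and is presumably close in spirit to the original proof in~\cite{PPKZ2023}.
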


The following remark regarding the property \eqref{eq: P1} will be used to analyze 4-decomposition of a certain subclass of functions in the $\cD_0$ class in Section~\ref{sec: 6 loptimal}.

\begin{rem} \label{rem:inverseP1}
	When $\pi$ is an APN permutation of $\F_2^m$ then it satisfies  the property \eqref{eq: P1} and consequently also \eqref{eq: P2}. Then, its inverse $\pi^{-1}$, which is also APN, satisfies both properties as well.
\end{rem}  

In the following statement, we provide an alternative characterization of the ~\eqref{eq: P2} property, which simplifies a specification of such permutations.
\begin{prop}\label{prop:Vs(f)}
Let $\pi$ be a permutation of $\F_2^m$ and let $S$ be a $k$-dimensional subspace of $\F_2^m$, $k\in \lbrace 1, 2, \ldots , m-1 \rbrace$, such that $D_aD_b \pi = 0_m$, for all $a,b \in S$. Denote by $V_S(f)$ the subspace of $\F_2^m$ generated by the set 
$ \lbrace D_a \pi(y) : a \in S \text{ and } y \in \F_2^m \rbrace.$ 
 Then, the permutation $\pi$ satisfies the property~\eqref{eq: P2} with respect to the subspace $S$ if and only if $\dim(V_S(f)) > \dim(S)=k$. 
\end{prop}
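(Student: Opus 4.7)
The key move is to reformulate the defining condition of \eqref{eq: P2} in terms of the orthogonal complement $V_S(f)^\perp$ of the subspace $V_S(f)$. Fix a vector $v \in \F_2^m$. By bilinearity of the scalar product, the requirement ``$v \cdot D_a\pi(y) = 0$ for all $a \in S$ and all $y \in \F_2^m$'' is equivalent to saying that $v$ is orthogonal to every element of the generating set $\{D_a \pi(y) : a \in S,\, y \in \F_2^m\}$, hence orthogonal to the whole subspace it spans. Therefore a subspace $V \subseteq \F_2^m$ realises the forbidden configuration in \eqref{eq: P2} if and only if $V \subseteq V_S(f)^\perp$.

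Next I would reconcile the dimension parameters. In Definition~\ref{defi:p2} the subspace $S$ has dimension $m-k$ and the forbidden $V$ has dimension $k$; in the present proposition we are writing $\dim(S)=k$, so the role of ``$k$'' is swapped, and the forbidden $V$ now has dimension $m - \dim(S) = m-k$. Combined with the previous step, property~\eqref{eq: P2} with respect to $S$ is equivalent to saying that $V_S(f)^\perp$ contains no subspace of dimension $m-k$.

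Finally I would invoke the elementary fact that a vector space $W$ contains a subspace of dimension $d$ if and only if $\dim(W) \geq d$. Applying this with $W = V_S(f)^\perp$ and $d = m-k$, and using the identity $\dim(V_S(f)^\perp) = m - \dim(V_S(f))$, I get that \eqref{eq: P2} with respect to $S$ holds if and only if
\[
m - \dim(V_S(f)) < m - k,
\]
which rearranges to $\dim(V_S(f)) > k = \dim(S)$. This is precisely the claimed characterisation.

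There is no real obstacle here; the whole argument is a dimension count once the orthogonality reformulation is observed. The only point that warrants care in the write-up is the index bookkeeping between $\dim(S)=k$ (as stated in the proposition) and the $\dim(S) = m-k$ convention adopted in Definition~\ref{defi:p2}, so that the forbidden subspaces $V$ are correctly identified as those of dimension $m-k$ rather than $k$.
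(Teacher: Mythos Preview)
Your argument is correct and is exactly the approach taken in the paper: the paper's proof consists of the single observation that $\dim(V_S(f)) \leq k$ if and only if $\dim(V_S(f)^{\perp}) \geq m-k$, which is precisely your orthogonality reformulation followed by the dimension count. Your explicit handling of the index swap between Definition~\ref{defi:p2} (where $\dim(S)=m-k$) and the proposition (where $\dim(S)=k$) is a useful clarification that the paper leaves implicit.
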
 
\begin{proof} This follows from the fact that $\dim(V_S(f)) \leq k$ if and only if $\dim(V_S(f)^{\perp}) \geq m-k$. 
	\end{proof}
Nevertheless, using the following lemma, it is possible to further refine this property. 
\begin{lemma} \cite{KudinPhD,Kudin2022} \label{lem:algdegvec}
	Let $G: \F_2^m \to \F_2^t$ be a vectorial Boolean function. If there exists an $(m-k)$-dimensional subspace $H$ of $\F_2^m$ such that $D_aD_b G=0_t$ for all $a,b \in H$, then the algebraic degree of $G$ is at most $k+1$.
\end{lemma}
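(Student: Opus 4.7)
The plan is to reduce the vectorial statement to the scalar (Boolean) case and then exploit the condition via a linear change of variables. Since $\deg(G) = \max_{i} \deg(g_i)$ for $G = (g_1,\ldots,g_t)$, and since the hypothesis $D_aD_b G = 0_t$ on $H$ means $D_aD_b g_i = 0$ on $H$ for every coordinate $g_i \in \mathcal{B}_m$, it is enough to prove the statement for a single Boolean function $f \in \mathcal{B}_m$: if $D_aD_b f = 0$ for all $a,b \in H$ and $\dim(H) = m-k$, then $\deg(f) \leq k+1$.

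For the Boolean case, I would apply a linear change of variables on $\F_2^m$, which preserves both the algebraic degree and the dimension of $H$, to assume that $H = \{0_k\} \times \F_2^{m-k}$. Writing $x = (y,z)$ with $y \in \F_2^k$ and $z \in \F_2^{m-k}$, the hypothesis becomes: for every fixed $y \in \F_2^k$, the section $f_y \colon z \mapsto f(y,z)$ satisfies $D_{a'}D_{b'} f_y(z) = 0$ for all $a',b',z \in \F_2^{m-k}$. A Boolean function whose second-order derivatives all vanish has ANF of degree at most $1$, so each section $f_y$ is affine in $z$. Consequently $f$ admits the decomposition
\[
f(y,z) = g_0(y) + \sum_{j=1}^{m-k} g_{1,j}(y)\, z_j
\]
for some Boolean functions $g_0, g_{1,1}, \ldots, g_{1,m-k} \in \mathcal{B}_k$.

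To finish, I would observe that $g_0$ and each $g_{1,j}$ depend only on the $k$ variables in $y$, so $\deg(g_0) \leq k$ and $\deg(g_{1,j}) \leq k$. Hence every monomial appearing in the ANF of $f$ has degree at most $k+1$, which yields $\deg(f) \leq k+1$. Applying this bound to every coordinate $g_i$ of $G$ then gives $\deg(G) \leq k+1$, as required.

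The only non-routine ingredient is the step identifying the vanishing of all second-order derivatives of $f_y$ with affineness in $z$; this is standard, since the second-order derivative $D_{e_i}D_{e_j}$ of a monomial $z_{i_1}\cdots z_{i_r}$ of degree $r \geq 2$ is nonzero for a suitable choice of basis vectors $e_i, e_j$. Beyond this, the argument is mere ANF bookkeeping, and no serious obstacle is expected.
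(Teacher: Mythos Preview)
Your argument is correct. The reduction to a single coordinate, the linear change of variables straightening $H$ to $\{0_k\}\times\F_2^{m-k}$, and the observation that each section $f_y$ is affine in $z$ (hence $\deg f\le k+1$) are all valid and constitute the standard proof of this fact.

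There is nothing to compare against in the paper itself: Lemma~\ref{lem:algdegvec} is quoted from~\cite{KudinPhD,Kudin2022} without proof. Your writeup is exactly the kind of elementary argument one expects for this result.
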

\begin{theo}\label{th:refineddegreeP2} For a permutation $\pi$ over $\F_2^m$, to satisfy the property  ~\eqref{eq: P2}, it is enough to verify that it satisfies \eqref{eq: P2} for all subspaces $S$ such that $\dim(S) \leq m-\deg(\pi)+1$, where $D_aD_b \pi =0_m$, for all $a,b \in S$. In particular, when $\deg(\pi)=m-1$, to verify the property~\eqref{eq: P2} it is enough to check that it satisfies \eqref{eq: P2} for all 2-dimensional subspaces and that it has no linear structures.
	
\end{theo}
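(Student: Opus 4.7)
The plan is to derive both assertions almost for free from Lemma \ref{lem:algdegvec}, together with Proposition \ref{prop:Vs(f)} (and Remark \ref{rem:P2andlinstruc}) for the degenerate one-dimensional case. Recall that Definition \ref{defi:p2} only concerns those subspaces $S \subseteq \F_2^m$ for which $D_aD_b \pi = 0_m$ holds for all $a,b \in S$; so the theorem will follow if I can bound the dimension of any such $S$ by $m - \deg(\pi) + 1$.

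First I would apply Lemma \ref{lem:algdegvec} directly to $G = \pi$, taking $H = S$ with $\dim(S) = m - k$. The lemma then forces $\deg(\pi) \leq k + 1$, equivalently $k \geq \deg(\pi) - 1$, so $\dim(S) = m - k \leq m - \deg(\pi) + 1$. Hence, no subspace of dimension strictly larger than $m - \deg(\pi) + 1$ can carry a vanishing second-order derivative of $\pi$, and the verification of \eqref{eq: P2} on such larger subspaces is vacuously true. This establishes the general statement.

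For the specialisation $\deg(\pi) = m - 1$, the bound reduces to $\dim(S) \leq 2$, so it only remains to deal with the case $\dim(S) = 1$. Take $S = \langle a \rangle$ with $a \neq 0_m$; the condition $D_aD_a\pi = 0_m$ is trivially satisfied. By Proposition \ref{prop:Vs(f)}, the property \eqref{eq: P2} fails on $S$ precisely when $\dim V_S(\pi) \leq 1$. Since $\pi$ is a permutation and $a\neq 0_m$, one has $D_a\pi(y) \neq 0_m$ for every $y\in \F_2^m$, so in fact $\dim V_S(\pi) \geq 1$. Thus the only failure case is $V_S(\pi) = \langle z \rangle$ with $z \neq 0_m$, meaning $D_a\pi(y) \in \{0_m,z\}$ for all $y$; combined with $D_a\pi(y) \neq 0_m$, this gives $D_a\pi(y) = z$ for all $y$, i.e., $a$ is a linear structure of $\pi$. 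The converse is exactly Remark \ref{rem:P2andlinstruc}. Hence checking \eqref{eq: P2} for all one-dimensional $S$ is equivalent to $\pi$ having no nonzero linear structures, which together with the two-dimensional verification completes the argument.

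The only non-formal step is the one-dimensional reduction, where it is essential to exploit that $\pi$ is a permutation to rule out $D_a\pi(y) = 0_m$; the rest of the proof is a clean bookkeeping application of Lemma \ref{lem:algdegvec}.
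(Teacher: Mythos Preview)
Your proposal is correct and follows essentially the same route as the paper: both apply Lemma~\ref{lem:algdegvec} to bound $\dim(S)$ by $m-\deg(\pi)+1$, and both reduce the one-dimensional case to the absence of linear structures by exploiting that $D_a\pi(y)\neq 0_m$ for a permutation. The only cosmetic difference is that you invoke Proposition~\ref{prop:Vs(f)} to phrase the one-dimensional failure as $\dim V_S(\pi)\le 1$, whereas the paper argues directly via $L^\perp$; these are the same argument in two notations.
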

\begin{proof}
	For a permutation $\pi$ of $\F_2^m$, Lemma~\ref{lem:algdegvec} implies that if there exists an $(m-k)$-dimensional subspace $S$ of $\F_2^m$, $k\in \lbrace 1, 2, \ldots , m-1 \rbrace$, such that $D_aD_b \pi = 0_m$, for all $a,b \in S$, then $\deg(\pi) \leq k+1$. Hence,
	 to check if a permutation satisfies the property~\eqref{eq: P2}, it is enough to check that it satisfies \eqref{eq: P2} for all subspaces $S$ such that $\dim(S) \leq m-\deg(\pi)+1$. This follows from Lemma~\ref{lem:algdegvec}, which implies that there are no subspaces $S$ such that $D_aD_b \pi = 0_m$, for all $a,b \in S$ and $\deg(\pi) > m-\dim(S)+1$.
   
   In the case $\deg(\pi)=m-1$, it follows that it is enough to check the property~\eqref{eq: P2} for the subspaces $S$ with $\dim(S) \leq 2$. Assume now that $S$ is a $1$-dimensional subspace of $\F_2^m$. Then the condition $D_aD_b \pi = 0_m$, for all $a,b \in S$ is trivially satisfied. Let $a \in \F_2^m$ be a nonzero vector such that $S= \langle a \rangle$. Since $\pi$ is a permutation, we have $D_a \pi(y) \neq 0_m$, for all $y \in \F_2^m$, and so $a$ is a linear structure of $\pi$ if and only if the subspace $L= \langle \{ D_a \pi(y) \neq 0_m \mid  y \in \F_2^m \} \rangle$ is $1$-dimensional. The subspace $L$ is $1$-dimensional if and only if $L^{\perp}$ is $(m-1)$-dimensional. Note that $L^{\perp}$ is such that 
   $$v \cdot D_a\pi(y) = 0;  \textnormal{ for all } y \in \F_2^m  \; \textnormal{, and  for all } v \in L^{\perp}.$$
   Hence, from the definition of \eqref{eq: P2}, we deduce that the vector $a$ is a linear structure of  $\pi$ if and only if $\pi$ does not satisfy the property~\eqref{eq: P2} with respect to the subspace $S=\langle a \rangle$. Consequently, the permutation $\pi$ has no linear structures if and only if $\pi$ satisfies the property~\eqref{eq: P2} for all $1$-dimensional subspaces, and the result follows.
\end{proof}
In~\cite[Remark 16]{PPKZ2023}, the authors provided examples of permutations that satisfy the \eqref{eq: P2} property but not~\eqref{eq: P1}. It was also noted that the property \eqref{eq: P1} implies \eqref{eq: P2}, though no formal proof of this fact was given. The following result  establishes this fact.
\begin{prop}\label{prop:implication}
	Let $\pi$ be a permutation of $\F_2^m$. If $\pi$ has the property \eqref{eq: P1}, then it also has the property \eqref{eq: P2}.
\end{prop}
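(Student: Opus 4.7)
The plan is to reduce the verification of \eqref{eq: P2} to two cases on the dimension of $S$: the forbidden case (which \eqref{eq: P1} rules out directly) and the dimension-one case (which \eqref{eq: P1} handles via linear structures). Concretely, I would first observe that \eqref{eq: P1} immediately eliminates every subspace $S \subseteq \F_2^m$ with $\dim(S) \geq 2$ from the hypothesis of Definition~\ref{defi:p2}. Indeed, such an $S$ contains two linearly independent vectors $v,w$, and then $D_vD_w\pi \neq 0_m$ by \eqref{eq: P1}; so no condition on $S$ is imposed by Definition~\ref{defi:p2} in this range. Consequently, to establish \eqref{eq: P2} it suffices to consider only $1$-dimensional subspaces $S = \langle a \rangle$ with $a \neq 0_m$, for which the hypothesis $D_bD_c\pi = 0_m$ on $S$ is automatic.

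Next, for such an $S = \langle a \rangle$, I would invoke the characterization developed inside the proof of Theorem~\ref{th:refineddegreeP2} (equivalently, Remark~\ref{rem:P2andlinstruc} read in the contrapositive): the permutation $\pi$ satisfies \eqref{eq: P2} with respect to $\langle a\rangle$ if and only if $a$ is not a linear structure of $\pi$. So to complete the argument I just need to show that a permutation with property \eqref{eq: P1} has no nonzero linear structures, and this is exactly assertion~1) of Theorem~\ref{theo: unique P1}. For self-containedness, I would also sketch the one-line direct argument: if $a \neq 0_m$ were a linear structure, then $D_a\pi$ is a constant, whence $D_vD_a\pi = 0_m$ for every $v \in \F_2^m$; choosing $v$ linearly independent from $a$ (possible whenever $m\geq 2$) contradicts \eqref{eq: P1}.

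Combining these two observations finishes the proof: \eqref{eq: P1} leaves no subspaces of dimension $\geq 2$ to check, and \eqref{eq: P1} (via absence of linear structures) guarantees \eqref{eq: P2} for every $1$-dimensional $S$. I do not foresee a genuine obstacle here — the entire content of the statement is already packaged in Theorem~\ref{theo: unique P1} and in the dimension-count implicit in \eqref{eq: P1}. The only bookkeeping subtlety is making sure that the definitional range $1 \leq \dim(S) \leq m-1$ in Definition~\ref{defi:p2} is correctly intersected with the restriction $\dim(S) \leq 1$ forced by \eqref{eq: P1}, so that the $1$-dimensional case is the unique one requiring verification.
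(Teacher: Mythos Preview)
Your proposal is correct and follows essentially the same approach as the paper's proof: both split into the cases $\dim(S)\ge 2$ (vacuous by \eqref{eq: P1}) and $\dim(S)=1$ (handled via the equivalence with absence of linear structures from the proof of Theorem~\ref{th:refineddegreeP2}, together with Theorem~\ref{theo: unique P1}). Your added self-contained argument that \eqref{eq: P1} forbids nonzero linear structures is a small bonus but does not change the route.
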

\begin{proof}
The property~\eqref{eq: P1} implies \eqref{eq: P2} for the subspaces $S$ with $\dim (S) \geq 2$ trivially, since if a permutation $\pi$ satisfies \eqref{eq: P1} then there are no subspaces $S$ with $\dim (S) \geq 2$ such that $D_aD_b \pi =0_m$ for all $a,b \in S$. Assume now that $\dim (S) =1$. 
Using the same notation as in Proposition~\ref{prop:Vs(f)}, the permutation $\pi$ has  linear structures if and only if $\dim(V_S(f))=1$.
From the proof of Theorem \ref{th:refineddegreeP2}, it follows that the permutation $\pi$ has no linear structures if and only if $\pi$ satisfies
the property  \eqref{eq: P2} for all 1-dimensional subspaces.
 Furthermore, if a permutation $\pi$ satisfies the property \eqref{eq: P1}, then we deduce from Theorem~\ref{theo: unique P1} in \cite{PPKZ2023} that it has no linear structures, hence the property~\eqref{eq: P1} also implies \eqref{eq: P2} for the subspaces $S$ with $\dim (S) =1$, and consequently, we conclude that if a permutation $\pi$ satisfies \eqref{eq: P1}, then it also satisfies  the property~\eqref{eq: P2}.	
\end{proof}
\section{Constructing permutations satisfying the \eqref{eq: P2} property}\label{sec: 4 P2 property}
Finding more constructions of permutations with the \eqref{eq: P2} property was mentioned as an open problem in \cite{PPKZ2023}. In this section, we provide a solution to this problem by showing that adjusting the initial conditions on permutations $\sigma_1$ and $\sigma_2$ of $\F_2^m$ used in the following secondary construction of permutations with~\eqref{eq: P1} property one can construct permutations with \eqref{eq: P2} property.
\begin{prop}\label{prop:gensecderlarger}\cite{PPKZ2023}
	Let $\sigma_1$ and $\sigma_2$ be two permutations of $\F_2^m$ such that $D_V\sigma_1 \neq D_V\sigma_2$ for all 2-dimensional subspaces $V$ of $\F_2^m$. Define the function $\pi \colon \F_2^{m+1} \to \F_2^{m+1}$ by
	\begin{equation}\label{eq: P1-P2 secondary}
		\pi(y,y_{m+1})= \left( \sigma_1(y) + y_{m+1}(\sigma_1(y)+\sigma_2(y)) , y_{m+1} \right) \text{, for all } y \in \F_2^m, y_{m+1} \in \F_2.
	\end{equation}
	Then the function $\pi$ is a permutation of $\F_2^{m+1}$ such that  $D_aD_b\pi \neq 0_{m+1}$ for  all 2-dimensional subspaces $W=\langle a, b \rangle$ of $\F_2^{m+1}$, that is, $\pi$ satisfies the \eqref{eq: P1} property.
\end{prop}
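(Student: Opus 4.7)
The plan is to verify bijectivity of $\pi$ and then to establish the $(P_1)$ property separately. Bijectivity is immediate: since $\pi$ preserves the last coordinate, it sends $\F_2^m\times\{0\}$ to itself via $y\mapsto(\sigma_1(y),0)$ and $\F_2^m\times\{1\}$ to itself via $y\mapsto(\sigma_2(y),1)$, both of which are bijections by the hypothesis that $\sigma_1,\sigma_2$ are permutations.

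For $(P_1)$ I argue by contradiction. Suppose $D_aD_b\pi=0_{m+1}$ for some linearly independent $a=(a',a_{m+1})$, $b=(b',b_{m+1})\in\F_2^{m+1}$. Since $D_aD_b\pi$ depends only on $W=\langle a,b\rangle$, I may freely change basis of $W$. If the projection of $W$ onto the last coordinate is trivial, then $a_{m+1}=b_{m+1}=0$ and $a',b'$ are linearly independent in $\F_2^m$; evaluating $D_aD_b\pi(y,y_{m+1})$ on the two slices gives $(D_{a'}D_{b'}\sigma_1(y),0)$ at $y_{m+1}=0$ and $(D_{a'}D_{b'}\sigma_2(y),0)$ at $y_{m+1}=1$. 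Vanishing therefore forces $D_{a'}D_{b'}\sigma_1=0_m=D_{a'}D_{b'}\sigma_2$, hence $D_V\sigma_1=D_V\sigma_2$ on the $2$-dimensional subspace $V=\langle a',b'\rangle$, contradicting the hypothesis.

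Otherwise the projection of $W$ is onto $\F_2$, and a basis change adapted to this projection lets me assume $a=(u,1)$ and $b=(v,0)$ with $v\ne 0_m$. A direct expansion of $D_aD_b\pi$ using \eqref{eq: P1-P2 secondary} shows that the last coordinate vanishes identically, the slice $y_{m+1}=0$ contributes $D_v\sigma_1(y)+D_v\sigma_2(y+u)$ to the first coordinate, and the slice $y_{m+1}=1$ contributes $D_v\sigma_2(y)+D_v\sigma_1(y+u)$ (the two conditions are equivalent via $y\mapsto y+u$). Thus $D_aD_b\pi=0_{m+1}$ reduces to the single identity $D_v\sigma_1(y)=D_v\sigma_2(y+u)$ for all $y\in\F_2^m$. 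If $u,v$ are linearly independent, applying $D_u$ to both sides yields $D_uD_v\sigma_1=D_uD_v\sigma_2$, contradicting the hypothesis on $\langle u,v\rangle$. In the degenerate subcases $u=0$ or $u=v$, the identity (together with $D_vD_v\sigma_2=0_m$ in the second subcase) collapses to $D_v\sigma_1=D_v\sigma_2$; then picking any $w\in\F_2^m$ linearly independent from $v$ (possible for $m\ge 2$) and applying $D_w$ yields $D_wD_v\sigma_1=D_wD_v\sigma_2$, contradicting the hypothesis on $\langle v,w\rangle$.

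The main obstacle is the mixed case just handled: the working hypothesis is a qualitative inequality $D_V\sigma_1\ne D_V\sigma_2$ rather than a pointwise non-vanishing condition like $(P_1)$ itself, so one must first distill $D_aD_b\pi=0_{m+1}$ into a translated equality of first-order derivatives of $\sigma_1$ and $\sigma_2$ and then apply a further derivative in a direction that produces a genuine $2$-dimensional subspace before invoking the hypothesis; the degenerate subcases where $u$ lies in the line $\langle v\rangle$ require the extra trick of exploiting $D_vD_v\sigma_2=0_m$ to collapse the translated identity before differentiating in a fresh direction.
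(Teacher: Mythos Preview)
Your proof is correct. Note, however, that the paper does not give its own proof of this proposition: it is quoted verbatim from~\cite{PPKZ2023} and used as a black box (the permutation claim is invoked in the first line of the proof of Theorem~\ref{th: P2 secondary}, and the general second-order derivative formula~\eqref{equTheo4.2 12} computed there agrees with your slice computations after a short rewriting). So there is no in-paper argument to compare against; your case split according to whether the projection of $W$ onto the last coordinate is trivial, followed by the ``differentiate once more in a fresh direction'' trick to manufacture a genuine $2$-dimensional subspace in the degenerate subcases $u\in\langle v\rangle$, is a clean self-contained verification. One small caveat worth making explicit: the degenerate-subcase step requires $m\ge 2$ (which you note), and indeed the statement is vacuous/false for $m=1$ since any permutation of $\F_2^2$ is affine.
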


A similar design method of preserving the property \eqref{eq: P2} was left as an open problem \cite[Open Problem 1]{PPKZ2023},  due to the complicated definition of this property. Employing the results in Section \ref{sec: 3 refining}, we provide a solution to this problem.
\begin{theo}\label{th: P2 secondary}
	Let $\sigma_1$ and $\sigma_2$ be two permutations of $\F_2^m$  
	and assume that $\sigma_1 + \sigma_2$ satisfies \eqref{eq: P2}. Let the function $\pi \colon \F_2^{m+1} \to \F_2^{m+1}$ be defined by Eq.~\eqref{eq: P1-P2 secondary}.
	Then, the function $\pi$ is a permutation of $\F_2^{m+1}$  that satisfies the \eqref{eq: P2} property.
\end{theo}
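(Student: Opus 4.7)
The plan is to verify the characterization of~\eqref{eq: P2} supplied by Proposition~\ref{prop:Vs(f)}: for every subspace $S \subseteq \F_2^{m+1}$ with $1 \leq \dim(S) \leq m$ satisfying $D_a D_b \pi = 0_{m+1}$ for all $a, b \in S$, I must show that $\dim(V_S(\pi)) > \dim(S)$. That $\pi$ is a permutation of $\F_2^{m+1}$ is immediate from the blockwise description $\pi(y, 0) = (\sigma_1(y), 0)$, $\pi(y, 1) = (\sigma_2(y), 1)$, and is already contained in Proposition~\ref{prop:gensecderlarger}.

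The first step is a direct computation of $D_{(\alpha, \alpha_{m+1})} D_{(\beta, \beta_{m+1})} \pi(y, y_{m+1})$: its last coordinate vanishes identically, while the first coordinate splits into a $y_{m+1}$-independent part plus $y_{m+1}\, D_\alpha D_\beta g(y)$, where $g := \sigma_1 + \sigma_2$. Hence the hypothesis on $S$ forces $D_c D_d g = 0_m$ for all $c, d \in \tilde S$, where $\tilde S := \mathrm{pr}_1(S) \subseteq \F_2^m$ is the projection of $S$ onto the first $m$ coordinates; this is the bridge that lets me invoke~\eqref{eq: P2} on $g$. Moreover, Lemma~\ref{lem:algdegvec} combined with~\eqref{eq: P2} on $g$ forces $g$ to be non-affine (and, as a special case, non-constant), so $\dim(\tilde S) \leq m - 1$ whenever $D_cD_d g = 0_m$ on $\tilde S$, and~\eqref{eq: P2} on $g$ applies for every $\tilde S$ with $\dim(\tilde S) \geq 1$.

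I then split based on whether $S \subseteq \F_2^m \times \{0\}$. If yes, $\tilde S$ has the same dimension as $S$, the generators $(D_c \sigma_i(y), 0)$ of $V_S(\pi)$ for $c \in \tilde S, i \in \{1, 2\}$ span at least $V_{\tilde S}(g) \times \{0\}$, and~\eqref{eq: P2} on $g$ yields $\dim(V_S(\pi)) \geq \dim(V_{\tilde S}(g)) > \dim(\tilde S) = \dim(S)$. Otherwise $S$ contains some $(a_0', 1)$, and the key observation is the pairing identity
\[
D_{(a', 1)} \pi(y, 0) + D_{(a', 1)} \pi(y, 1) = (D_{a'} g(y), 0)
\]
for every $(a', 1) \in S$, together with the analogue $D_{(c, 0)} \pi(y, 0) + D_{(c, 0)} \pi(y, 1) = (D_c g(y), 0)$ for every $(c, 0) \in S \cap (\F_2^m \times \{0\})$. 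Combined with the additivity $V_{T_1 + T_2}(g) = V_{T_1}(g) + V_{T_2}(g)$ (which follows from $D_{t_1 + t_2} g(y) = D_{t_1} g(y) + D_{t_2} g(y + t_1)$), these inclusions give $V_{\tilde S}(g) \times \{0\} \subseteq V_S(\pi)$. Since $V_S(\pi)$ also contains a vector with last coordinate $1$ (e.g.\ $D_{(a_0', 1)} \pi(y, 0)$), I conclude $\dim(V_S(\pi)) \geq \dim(V_{\tilde S}(g)) + 1 \geq \dim(\tilde S) + 2 \geq \dim(S) + 1$ whenever $\dim(\tilde S) \geq 1$, where I use $\dim(\tilde S) \geq \dim(S) - 1$ in the last step.

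The only remaining sub-case is $\tilde S = \{0\}$, which forces $S = \langle (0, 1) \rangle$ of dimension $1$; here $V_S(\pi) = \langle (g(y), 1) : y \in \F_2^m \rangle$, and its dimension is at least $2$ because $g$ is non-constant. The main technical hurdle is the hybrid case where $S$ straddles the two layers $\F_2^m \times \{0\}$ and $\F_2^m \times \{1\}$: establishing the inclusion $V_{\tilde S}(g) \times \{0\} \subseteq V_S(\pi)$ relies on the pairing trick above, while the edge case $\tilde S = \{0\}$ must be disposed of by invoking the fact that~\eqref{eq: P2} rules out $g$ being constant.
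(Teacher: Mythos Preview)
Your proof is correct. Both you and the paper compute the second derivative of $\pi$, read off from the $y_{m+1}$-coefficient that $D_cD_d g=0_m$ on the projection $\tilde S=\mathrm{pr}_1(S)$, and then reduce to the \eqref{eq: P2} hypothesis on $g=\sigma_1+\sigma_2$; the logical packaging, however, differs. The paper argues by contradiction in the orthogonal formulation of \eqref{eq: P2}: it assumes a witnessing pair $(S,V)$ for $\pi$, splits according to whether $\dim(S')=\dim(S)$ or $\dim(S)-1$ (equivalently, whether $(0_m,1)\in S$), and in each case extracts from $V$ a subspace violating \eqref{eq: P2} for $g$. You instead work directly with the dual characterization of Proposition~\ref{prop:Vs(f)} and lower-bound $\dim V_S(\pi)$, splitting on whether $S\subseteq\F_2^m\times\{0\}$. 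Your ``pairing identity'' $D_{(a',1)}\pi(y,0)+D_{(a',1)}\pi(y,1)=(D_{a'}g(y),0)$ (together with its $(c,0)$-analogue) gives a clean explicit reason for the inclusion $V_{\tilde S}(g)\times\{0\}\subseteq V_S(\pi)$, which the paper obtains only implicitly through the orthogonality relations; combined with the extra vector of last coordinate $1$, this yields the sharper bound $\dim V_S(\pi)\ge\dim(\tilde S)+2$ in your Case~2. Two minor remarks: invoking Lemma~\ref{lem:algdegvec} for ``$g$ non-affine'' is unnecessary (it follows directly from \eqref{eq: P2} for $g$ at any $1$-dimensional $S$), and the additivity $V_{T_1+T_2}(g)=V_{T_1}(g)+V_{T_2}(g)$ is not really needed since $\tilde S$ equals the set-theoretic union $S_0\cup S_1$, so you already have all generators of $V_{\tilde S}(g)$ directly.
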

\begin{proof}
The fact that $\pi$ is a permutation of $\F_2^{m+1}$ follows from Proposition \ref{prop:gensecderlarger}. Assume that $\pi$ does not satisfy the \eqref{eq: P2} property.
Let $S \subset \F_2^{m+1}$ be a subspace of $\F_2^{m+1}$, $1 \leq \dim(S) \leq m$, such that $D_aD_b \pi =0_{m+1}$, for all $a,b \in S$, and let $V \subset \F_2^{m+1}$ be a subspace of $\F_2^{m+1}$ such that $\dim(S)+ \dim(V)=m+1$ and $v \cdot D_a \pi =0$, for all $v \in V$ and $a \in S$. 
From Lemma \ref{lem:algdegvec}, we know that $1 \leq \dim(S) \leq (m+1) -\deg(\pi)+1$.  It is clear that  $\deg(\pi) \geq 3 $ since $\sigma_1+\sigma_2$ satisfies  \eqref{eq: P2}.  Hence,  $1 \leq \dim(S) \leq m-1$ and  $\dim(V) \geq 2$ since $\dim(S)+\dim(V)=m+1$.
Let $S' = \lbrace \, s' \in \F_2^m  \mid (s',s_{m+1}) \in S \text{ for some }  s_{m+1} \in \F_2 \, \rbrace $ and $V' = \lbrace \, v' \in \F_2^m  \mid (v',v_{m+1}) \in V \text{ for some }  v_{m+1} \in \F_2 \, \rbrace $. 
Let $S_0 = \lbrace \, s' \in \F_2^m  \mid (s',0) \in S \, \rbrace $ and $V_0 = \lbrace \, v' \in \F_2^m  \mid (v',0) \in V \, \rbrace $.  From Equation \eqref{eq: P1-P2 secondary}, we calculate the derivatives

	\begin{eqnarray}\label{equTheo4.2 11}
	D_{(a',a_{m+1})} \pi(y,y_{m+1})= (D_{a'}\sigma_1(y) + y_{m+1}D_{a'}(\sigma_1 + \sigma_2)(y) & \nonumber \\ 
    + a_{m+1}(\sigma_1 + \sigma_2)(y+a'), \, a_{m+1} \,), &
	\end{eqnarray}

	\begin{eqnarray}\label{equTheo4.2 12}
	D_{(b', b_{m+1})}D_{(a', a_{m+1})}\pi(y,y_{m+1})= ( D_{b'}D_{a'}\sigma_1(y) + y_{m+1}D_{b'}D_{a'}(\sigma_1 + \sigma_2)(y) & \nonumber
    \\ + b_{m+1}D_{a'}(\sigma_1 + \sigma_2)(y+b') + a_{m+1}D_{b'}(\sigma_1 + \sigma_2)(y+a'), \, 0 ). &
	\end{eqnarray}

\noindent
From Equation \eqref{equTheo4.2 12}, it follows that $D_{b'}D_{a'}(\sigma_1 + \sigma_2)=0_m$, for all $a',b' \in S'$ (because of the variable $y_{m+1}$).
	Because $\dim(S) \geq \dim(S')\geq \dim(S)-1$, there are two cases to be considered depending on the dimension of $S'$.
	
	\begin{enumerate}[a)]
            \item If $\dim(S')=\dim(S)$, then, for all $v' \in V'$, from Equation \eqref{equTheo4.2 11} it follows that $v' \cdot D_{a'}(\sigma_1 + \sigma_2)=0$ (because of the variable $y_{m+1}$), for all $a' \in S'$. However, this implies that $\sigma_1 + \sigma_2$ does not satisfy the \eqref{eq: P2} property because $\dim(V') \geq \dim(V)-1= m- \dim(S')$.

            \item If $\dim(S')=\dim(S)-1$, then $\dim(S')=\dim(S_0)$, i.e., $S'=S_0$, hence $(0_m,1) \in S$. Since $\dim(V) \geq 2$, there exists a nonzero vector $v' \in V_0$. From Equation \eqref{equTheo4.2 11}, for $(a', a_{m+1})=(0_m,1)$, it follows that $v' \cdot (\sigma_1 + \sigma_2)=0$, and so $v' \cdot D_{w}(\sigma_1 + \sigma_2)=0$, for all $w \in \F_2^m$. However, this again implies that $\sigma_1 + \sigma_2$ does not satisfy the \eqref{eq: P2} property,  arriving at a contradiction again, and the result follows. \qedhere
        \end{enumerate}
\end{proof}
	
As already mentioned in the introduction, there exist 34 equivalence classes of quadratic permutations on $\F_2^5$ (out of the known 75, see~\cite{BBS2017}) that satisfy the property~\eqref{eq: P2}. Using these examples and Theorem~\ref{th: P2 secondary}, one can construct more permutations with~\eqref{eq: P2} property, as we illustrate in the following example.
	
	\begin{ex}
		Consider the following permutations on $\F_2^5$ that are given by their ANFs as follows:
		\begin{equation*}
			\sigma_1(y)=\begin{pmatrix}
				y_1\\
				y_2 + y_1 y_2 + y_1 y_3\\
				y_3 + y_1 y_3 + y_1 y_5\\
				y_1 y_2 + y_4 + y_1 y_4\\
				y_2 y_3 + y_1 y_4 + y_5 + y_1 y_5
			\end{pmatrix}^T \quad \mbox{and} \quad\sigma_2(y)= \begin{pmatrix}
				y_1 y_2 + y_1 y_5 + y_2 y_5\\
				y_1 + y_2 + y_2 y_5\\
				y_3 + y_1 y_4\\ 
				y_1 y_3 + y_4 + y_1 y_4\\
				y_1 + y_1 y_3 + y_3 y_4 + y_5 + y_2 y_5
			\end{pmatrix}^T.
		\end{equation*}
		The mapping $\sigma_1+\sigma_2$ has the~\eqref{eq: P2} property, though it is not a permutation, since, e.g, $|(\sigma_1+\sigma_2)(0)|=9$. By Theorem~\ref{th: P2 secondary}, the mapping $\pi(y,y_{m+1})= \left( \sigma_1(y) + y_{m+1}(\sigma_1(y)+\sigma_2(y)) , y_{m+1} \right)$, where $y \in \F_2^5$, $y_{m+1} \in \F_2$ is a permutation with~\eqref{eq: P2} property on $\F_2^6$.
	\end{ex}
	
	The following example indicates that the condition $\sigma_1 + \sigma_2$ satisfies the \eqref{eq: P2} property is only sufficient but not necessary for the mapping $\sigma$ to be a permutation with this property.
	
	\begin{ex}
		Let $\sigma_1$ be defined as in the previous example. Define the permutation $\sigma_3$ on $\F_2^5$ as follows:
		\begin{equation*}
			\sigma_3(y)=\begin{pmatrix}
				y_1\\
				y_2 + y_1 y_2 + y_1 y_4\\
				y_1 y_2 + y_3 + y_1 y_3\\
				y_2 y_3 + y_4 + y_1 y_4 + y_1 y_5\\
				y_1 y_3 + y_2 y_4 + y_5 + y_1 y_5
			\end{pmatrix}^T.
		\end{equation*} 
		Then, $\sigma_1+\sigma_3$ is given by
		\begin{equation*}
			(\sigma_1+\sigma_3)(y)=\begin{pmatrix}
				0\\
				y_1 y_3 + y_1 y_4\\
				y_1 y_2 + y_1 y_5\\
				y_1 y_2 + y_2 y_3 + y_1 y_5\\ 
				y_1 y_3 + y_2 y_3 + y_1 y_4 + y_2 y_4
			\end{pmatrix}^T.
		\end{equation*} 
		Note that the mapping $\pi(y,y_{m+1})= \left( \sigma_1(y) + y_{m+1}(\sigma_1(y)+\sigma_3(y)) , y_{m+1} \right)$, where $y \in \F_2^5$ and $y_{m+1} \in \F_2$, is a permutation with~\eqref{eq: P2} property on $\F_2^6$, though the mapping  $\sigma_1+\sigma_3$ does not have the~\eqref{eq: P2} property, as we indicate below. Let $S = \langle (0, 0, 1, 1, 0) \rangle$ and $V=\left\langle
		\scalebox{0.7}{$\begin{array}{ccccc}
				1 & 0 & 0 & 0 & 0 \\
				0 & 1 & 0 & 0 & 0 \\
				0 & 0 & 1 & 0 & 0 \\
				0 & 0 & 0 & 0 & 1 \\
			\end{array}$}
		\right\rangle$. We need to show that 
		$v \cdot D_a\pi(y) = 0;  \textnormal{ for all } a \in S, \; \textnormal{ all } y \in \F_2^m  \; \textnormal{, and  for all } v \in V$. The statement is obviously true for $a=0_5\in S$. For $a=(0, 0, 1, 1, 0)\in S$, we have that $D_a(\sigma_1+\sigma_3)(y)=(0, 0, 0, y_2, 0)$, and thus clearly $v \cdot D_a(\sigma_1+\sigma_3) = 0,  \textnormal{ for all }  v \in V$.
	\end{ex}

\section{Concatenation using ``swapping-like'' mappings -- a generalization}\label{sec: 5 swapping}
We first recall the following bent 4-concatenation approach considered in~\cite{PPKZ2023} (efficiently satisfying the dual bent condition), where the functions $f_i$ are defined below:
\begin{equation}\label{eq: bent4 change variables}
	\begin{split}
		f_1(x,y)&=f_2(x,y)=x \cdot \pi(y) + h_1(y),\\
		f_3(x,y)&=f_4(x,y)+1= y \cdot \sigma(x) + h_2(x),
	\end{split}
\end{equation}
for all $x,y \in \F_2^{m}$.

This approach was called ``swapping of variables'' in \cite{PPKZ2023} and is  in fact given by a linear transformation $L$, which maps the basis of the canonical $\mathcal{M}$-subspace so that $L\colon\langle I_m|O_m \rangle\to\langle O_m|I_m\rangle$ (where $I_m$ and $O_m$ stand for the all-zero and identity (binary) matrix of size $m \times m$, respectively). In a similar manner, one can introduce a series of linear mappings $L$ that can be  applied  to Maiorana-McFarland bent functions $f$ with the canonical $\mathcal{M}$-subspace $U$, in order to get bent functions $f'$ in $\mathcal{M}^\#$ with a unique $\mathcal{M}$-subspace $U'$, such that $U\cap U'$ is only of a ``small'' dimension. Then, similarly to Eq.~\eqref{eq: bent4 change variables}, one can concatenate functions
\begin{equation*}\label{eq: bent4 change variables generalized}
	\begin{split}
		f_1(x,y)&=f_2(x,y)=x \cdot \pi(y) + h_1(y)=f(x,y),\\
		f_3(x,y)&=f_4(x,y)+1=f'(x,y).
	\end{split}
\end{equation*}
The following result demonstrates that this is indeed possible. Notice also that any permutation $\pi$ that satisfies the property \eqref{eq: P2} can be used in our construction method. For convenience, we say that $V$ is an \textit{$\mathcal{M}_k$-subspace} of $f\in\mathcal{B}_n$ if it is an $\mathcal{M}$-subspace of $f$ of dimension $k$.  

\begin{theo}	\label{theo bent and q bent}
Let $n=2m$, and $x, y  \in \F_2^m$.
Let $\pi$ be a permutation of $\F_2^m$, $m \geq 3$ such that 
	$f_1(x,y)=x \cdot \pi(y) + h_1(y)$ is a bent  function with the unique canonical $\mathcal{M}_m$-subspace $\F_2^m\times\{0_m\}$. Let $P$ be a  permutation  over the set $\{1,2,\ldots,n\}$ such that there exists at least one element $i\in \{1,\ldots,m \}$ such that  $P(i)\notin \{1,\ldots,m \}$.
	Let $f_2(x, y)=f_1(x, y)$, 	$f_3(x, y)= f_1(x_{P(1)},\ldots,x_{P(n)})$,
	$f_4(x, y)=f_3(x, y)+1$, where $x=(x_{1}, \ldots, x_m )$ and $y=(x_{m+1}, \ldots, x_n )$.
	Set $f=f_1|| f_2||f_3||f_4$, which by Eq. \eqref{eq:ANF_4conc} gives
	\begin{equation*}\label{eq:formof4bent}
		f(x, y,z_1,z_2)=(1 +  z_1)f_1(x, y) +  z_1f_3(x, y) +  z_1z_2 ,\ (x, y)\in\F_2^n,z_1,z_2\in\F_2.
	\end{equation*}
	Then, $f \in \B_{n+2}$ is bent and outside  $\cM^{\#}$.
	
\end{theo}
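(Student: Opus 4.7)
The plan has two stages: establishing bentness of $f$ via the dual bent condition, and then ruling out $f\in\cM^{\#}$ through a case analysis on a hypothetical $(m+1)$-dimensional $\mathcal{M}$-subspace of $f$.

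For bentness, note that $f_3=f_1\circ L$ where $L(z)=(z_{P(1)},\ldots,z_{P(n)})$ is a linear bijection, so $f_3$ is bent and hence so is $f_4=f_3+1$. Since $f_1=f_2$ we have $f_1^{\ast}+f_2^{\ast}=0$, and since $f_4=f_3+1$ flips the Walsh sign we have $f_3^{\ast}+f_4^{\ast}=1$; the dual bent condition $f_1^{\ast}+f_2^{\ast}+f_3^{\ast}+f_4^{\ast}=1$ of Subsection~\ref{sub: bent-4 cat and its prop} is therefore satisfied and $f$ is bent.

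For $f\notin\cM^{\#}$ I would argue by contradiction, assuming $U\subset\F_2^{n+2}$ is an $(m+1)$-dimensional $\mathcal{M}$-subspace of $f$. Let $V_1:=\F_2^m\times\{0_m\}$ be the unique $\mathcal{M}_m$-subspace of $f_1$ and $V_3:=L^{-1}(V_1)$ the unique $\mathcal{M}_m$-subspace of $f_3$ (the bijection $L$ transports $\mathcal{M}$-subspaces); the hypothesis on $P$ guarantees $V_1\ne V_3$ and therefore $\dim(V_1\cap V_3)\le m-1$. Set $U_0:=U\cap(\F_2^n\times\{(0,0)\})$ and let $W\subseteq\F_2^n$ be its image under the projection $(\alpha,\alpha_1,\alpha_2)\mapsto\alpha$; a dimension count combined with bentness of $f_1$ (which caps any $\mathcal{M}$-subspace of $f_1$ at dimension $m$) confines $\dim U_0\in\{m-1,m\}$. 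Because $f_{12}=0$ and $f_{1234}=1$ for our choice of $f_i$, formula~\eqref{eq:2ndderiv_conc correct} for $D_aD_bf\equiv 0$ (with $a=(\alpha,a_1,a_2)$, $b=(\beta,b_1,b_2)\in U$) collapses to the two requirements \textup{(i)} $D_\alpha D_\beta(f_1+f_3)\equiv 0$, and \textup{(ii)} $D_\alpha D_\beta f_1(x)+a_1 D_\beta(f_1+f_3)(x+\alpha)+b_1 D_\alpha(f_1+f_3)(x+\beta)+(a_1 b_2+b_1 a_2)=0$. In the subcase $\dim U_0=m$, applying \textup{(i)} and \textup{(ii)} to pairs from $U_0\times U_0$ makes $W$ an $\mathcal{M}_m$-subspace of both $f_1$ and $f_3$ (since being an $\mathcal{M}$-subspace of $f_1$ and of $f_1+f_3$ implies being one of $f_3$), whence $W=V_1=V_3$ by uniqueness, contradicting $V_1\ne V_3$.

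The main obstacle is the subcase $\dim U_0=m-1$. Here the linear map $U\to\F_2^2$, $(\alpha,\alpha_1,\alpha_2)\mapsto(\alpha_1,\alpha_2)$, is surjective, so one can pick $u=(u',1,0)$ and $v=(v',0,1)$ in $U$, with $\dim W=m-1$. Conditions \textup{(i)} and \textup{(ii)} applied to pairs $(a,v)$ with $a\in U_0$ show that $W+\langle v'\rangle$ is an $\mathcal{M}$-subspace of both $f_1$ and $f_3$, so proving $v'\notin W$ lifts it to an $\mathcal{M}_m$-subspace of both $f_1$ and $f_3$, forcing $W+\langle v'\rangle=V_1=V_3$ by uniqueness and yielding the contradiction $V_1=V_3$. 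To eliminate the possibility $v'\in W$ I would derive and compare two consequences of \textup{(ii)}: the pair $(a,u)$ with $a\in U_0$, after simplification using $D_\alpha D_{u'}f_1(x)=D_\alpha f_1(x+u')+D_\alpha f_1(x)$, yields $D_\alpha f_1(x)=D_\alpha f_3(x+u')$ for every $\alpha\in W$; the mixed pair $(u,v)$ instead yields $D_{v'}f_1(x)+D_{v'}f_3(x+u')=1$, where the residual constant $1=a_1 b_2+b_1 a_2$ traces back precisely to the nonzero constant $f_{1234}=1$ that our specific concatenation produces. If $v'$ were in $W$, specialising the first identity to $\alpha=v'$ would contradict the second, so $v'\notin W$ and the argument closes.
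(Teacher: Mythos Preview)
Your proof is correct. The bentness argument and the subcase $\dim U_0=m$ are essentially the paper's: the paper phrases the latter as ``$(V\cap(\F_2^n\times\{(0,0)\}))\setminus U'\neq\emptyset$ or $(V\cap(\F_2^n\times\{(0,0)\}))\setminus U\neq\emptyset$'' and then evaluates the derivative at $z_1=1$ or $z_1=0$, which is the contrapositive of your $W=V_1=V_3$ contradiction via conditions (i) and (ii).

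Where you genuinely diverge is the subcase $\dim U_0=m-1$. The paper disposes of it in one line: from $\dim(V\cap(\F_2^n\times\{(0,0)\}))<m$ it asserts $V\cap(\{0_n\}\times\F_2^2)=\{0_n\}\times\F_2^2$ and then computes $D_{(0_n,1,0)}D_{(0_n,0,1)}f=1$. But surjectivity of the projection $V\to\F_2^2$ does not by itself force $\{0_n\}\times\F_2^2\subset V$, so the paper's case (b) as written is under-argued. Your route---pulling the identities $D_\alpha f_1(x)=D_\alpha f_3(x+u')$ for $\alpha\in W$ from the pairs $(a,u)$ and $D_{v'}f_1(x)+D_{v'}f_3(x+u')=1$ from the pair $(u,v)$, then contrasting them to rule out $v'\in W$ and hence forcing $W+\langle v'\rangle=V_1=V_3$---is longer but actually complete; it is what the paper's proof needs to fill the gap in its second case.
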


\begin{proof}
	Since $f_1^* +  f_1^* +  f_3^* + (f_3 +  1)^*=1$, then $f$ is bent.
	
	For convenience, we denote $\mb{a}=(\mb{a}',a_{n+1},a_{n+2}),
	\mb{b}=(\mb{b}',b_{n+1},b_{n+2})\in {\Bbb F}_2^{n}\times {\Bbb F}_2\times {\Bbb F}_2$.
	Let $V$ be an arbitrary $(m+1)$-dimensional subspace of $\F_2^{n+2}$.
	From Lemma \ref{lem M-M second}, it is sufficient to show that for an arbitrary $(m+1)$-dimensional
	subspace $V$ of $\F_2^{n+2}$
	one can always find two vectors $\mb{a},\mb{b}\in { V}$ such that $  D_{(\mb{a}', a_{n+1}, a_{n+2})}D_{(\mb{b}', b_{n+1}, b_{n+2})}f(x, y,z_1,z_2)\neq 0 $, for some $(x, y,z_1,z_2)\in\vF{n+2}$.
	We have
		\begin{equation} \label{eq1: the main}
			\begin{array}{rl}
				&D_{(\mb{a}', a_{n+1}, a_{n+2})}D_{(\mb{b}', b_{n+1}, b_{n+2})}f(x, y,z_1,z_2)\\
				=& (1 +  z_1)D_{\mb{a}'}D_{\mb{b}'} f_1(x, y)  +  z_1D_{\mb{a}'}D_{\mb{b}'} f_3(x, y)  +  a_{n+1} D_{\mb{b}'} \left(f_1 +  f_3\right)(x, y) +  \mb{a}')\\
			+	&  b_{n+1}D_{\mb{a}'}\left(f_1 +  f_3\right)(x, y) +  \mb{b}')  +  a_{n+1}b_{n+2} +  a_{n+2}b_{n+1}.
			\end{array}
		\end{equation}
	There are two cases to be considered. 
	\begin{enumerate}
		\item   We first assume  that  $\dim\left(V\cap (\F_2^n\times \{(0,0)\})\right)\geq m$, which  will imply the existence of  two vectors $\mb{a}=(\mb{a}',a_{n+1},a_{n+2}),
		\mb{b}=(\mb{b}',b_{n+1},b_{n+2})\in V$ such that $ \mb{a}'\neq \mb{b}'$,   $a_{n+1}=a_{n+2}=b_{n+1}=b_{n+2}=0$,  for which  $D_{\mb{a}'}D_{\mb{b}'} f_3\not\equiv 0$ or $D_{\mb{a}'}D_{\mb{b}'} f_1\not\equiv 0$,
			 as shown below.
			
		Namely, from the definition of $P$ and $f_3$, we know that 
			$f_3$  also has  a unique $\mathcal{M}_m$-subspace $U'$ and $U\neq U'$, assuming that $f_1$ has the unique canonical $\mathcal{M}_m$-subspace $U$. 
			
			Thus, we must have 
			$$ \left(V\cap (\F_2^n\times \{(0,0)\})\right)\setminus U'\neq \emptyset,$$
			or
			$$ \left(V\cap (\F_2^n\times \{(0,0)\})\right)\setminus U\neq \emptyset,$$
			since $\dim\left(V\cap (\F_2^n\times \{(0,0)\})\right)\geq m$ and $U\neq U'$.
			
			If 
			$$ \left(V\cap (\F_2^n\times \{(0,0)\})\right)\setminus U'\neq \emptyset,$$
			then we can 
			find two vectors $\mb{a}=(\mb{a}',a_{n+1},a_{n+2}),
			\mb{b}=(\mb{b}',b_{n+1},b_{n+2})\in V$ such that $ \mb{a}'\neq \mb{b}'$,   $a_{n+1}=a_{n+2}=b_{n+1}=b_{n+2}=0$,  and moreover  $D_{\mb{a}'}D_{\mb{b}'} f_3\not\equiv 0$ since 	$f_3$  has   a unique $\mathcal{M}_m$-subspace $U'$. 
			
			From Eq. \eqref{eq1: the main},  for $z_1=1$, we obtain
			$$D_{(\mb{a}', a_{n+1}, a_{n+2})}D_{(\mb{b}', b_{n+1}, b_{n+2})}f(x, y,1,z_2)= D_{\mb{a}'}D_{\mb{b}'} f_3(x, y)\not\equiv 0.$$

			Now, assume that 	$$ \left(V\cap (\F_2^n\times \{(0,0)\})\right)\setminus U\neq \emptyset.$$
			Similarly,  there will exist two vectors $\mb{a}=(\mb{a}'',a_{n+1},a_{n+2}),
			\mb{b}=(\mb{b}'',b_{n+1},b_{n+2})\in V$ such that $ \mb{a}''\neq \mb{b}''$,   $a_{n+1}=a_{n+2}=b_{n+1}=b_{n+2}=0$,  for which  $D_{\mb{a}''}D_{\mb{b}''} f_1\not\equiv 0$. Setting  $z_1=0$ in Eq.  \eqref{eq1: the main}, we obtain
			$$ D_{(\mb{a}', a_{n+1}, a_{n+2})}D_{(\mb{b}', b_{n+1}, b_{n+2})}f(x, y,0,z_2)=D_{\mb{a}'}D_{\mb{b}'} f_1(x, y)\not\equiv 0,$$
			and again  we conclude that $D_{\mb{a}}D_{\mb{b}}f(x, y,z_1,z_2)\neq 0$. 
		\item When $\dim\left(V\cap (\F_2^n\times \{(0,0)\})\right)< {m}$, 
		we have $V\cap (\{\mb{0}_n\}\times \F_2^2)=\{0_n\} \times \F_2^2$  since $$\dim\left(V\cap (\F_2^n\times \F_2^2) \right)=m+1.$$
		Furthermore, we can find two vectors $\mb{a}=(\mb{a}',a_{n+1},a_{n+2}),
		\mb{b}=(\mb{b}',b_{n+1},b_{n+2})\in V$ such that $ \mb{a}'=\mb{0}_n, \mb{b}'=\mb{0}_n$, $a_{n+1}=1, b_{n+1}=0$, and $a_{n+2}=0, b_{n+2}=1$. From Eq. \eqref{eq1: the main}, we have
		\begin{equation*} \label{eq1: the main 1}
			\begin{array}{c}
				D_{(\mb{0}_n, 1, 0)}D_{(\mb{0}_n, 0, 1)}f(x, y,z_1,z_2)
				= 1\neq 0.
			\end{array}
		\end{equation*}
	\end{enumerate}
	Thus, there is no $(m+1)$-dimensional linear subspace of $\vF{n+2}$ on which the second-order derivatives of $f$ vanish, i.e., $f$ is outside $\cM^{\#}$.
\end{proof}

The main condition in Theorem \ref{theo bent and q bent} is that the functions $f_1$ and $f_3$ do not share their unique $\cM$-subspaces of maximal dimensions (thus ensuring that $U \neq U'$). More generally, instead of a suitable permutation of indices used in Theorem \ref{theo bent and q bent}, the same goal can be achieved by properly selecting linear permutations $L: \F_2^m \times \F_2^m \to  \F_2^m \times \F_2^m$, as stated below.
\begin{cor}\label{cor:swapping}
	Let $n=2m$  and $(x, y)\in \F_2^m\times \F_2^m$.  Let $\pi$ be a permutation of $\F_2^m$, $m \geq 3$ such that 
	$f_1(x, y)=x \cdot \pi(y) + h_1(y)$ is a bent  function with the unique canonical $\mathcal{M}_m$-subspace $U=\F_2^m\times\{0_m\}$. Let $L$ be a linear permutation  over $\F_2^n$ such that $f_1(L(x, y)) $ has  a unique $\mathcal{M}_m$-subspace $U'$ and $U'\neq U$.
	Define $f_i \in \B_n$, for $i=2,3,4$, as: 
	$$f_2(x, y)=f_1(x, y), \;\; 	f_3(x, y)= f_1(L(x, y)), \; \;f_4(x, y)=f_3(x, y)+1.$$ 
	Then,  $f=f_1|| f_2||f_3||f_4 \in \B_{n+2}$, 
	is bent and outside  $\cM^{\#}$.
\end{cor}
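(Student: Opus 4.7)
The plan is to follow the blueprint of the proof of Theorem~\ref{theo bent and q bent} almost verbatim, replacing the index-permutation used there by the linear permutation $L$. The only substantive difference is a short preparatory remark explaining why $f_3=f_1\circ L$ still has a unique $\mathcal{M}_m$-subspace, which is needed to invoke the same case analysis.

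First I would verify bentness of $f$. Since $f_1$ is bent and $L$ is a linear permutation, $f_3 = f_1\circ L$ is bent; hence all four pieces $f_1,f_2=f_1,f_3,f_4=f_3+1$ are bent on $\F_2^n$. The dual bent condition then holds:
\[
f_1^\ast + f_2^\ast + f_3^\ast + f_4^\ast \;=\; f_1^\ast + f_1^\ast + f_3^\ast + (f_3+1)^\ast \;=\; 1,
\]
so $f=f_1\|f_2\|f_3\|f_4 \in \mathcal{B}_{n+2}$ is a bent 4-concatenation.

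Next I would establish that $f_3$ has a unique $\mathcal{M}_m$-subspace, namely $U'=L^{-1}(U)$. The key fact is that for any linear permutation $L$ of $\F_2^n$ and any subspace $W\subseteq \F_2^n$, $W$ is an $\mathcal{M}$-subspace of $f_1\circ L$ if and only if $L(W)$ is an $\mathcal{M}$-subspace of $f_1$; this is immediate from $D_aD_b(f_1\circ L)(x) = D_{L(a)}D_{L(b)}f_1(L(x))$. Since $f_1$ has $U$ as its unique $\mathcal{M}_m$-subspace by hypothesis, $f_3$ has $U'=L^{-1}(U)$ as its unique $\mathcal{M}_m$-subspace. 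The hypothesis $U'\neq U$ is exactly what played the role of ``the permutation $P$ moves some index out of $\{1,\dots,m\}$'' in Theorem~\ref{theo bent and q bent}.

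From this point onward, the proof of Theorem~\ref{theo bent and q bent} applies verbatim. I would recall the expression of $D_{\mathbf{a}}D_{\mathbf{b}}f(x,y,z_1,z_2)$ from Eq.~\eqref{eq1: the main}, and, given an arbitrary $(m+1)$-dimensional subspace $V\subseteq \F_2^{n+2}$, split into the two cases $\dim(V\cap(\F_2^n\times\{(0,0)\}))\geq m$ and $\dim(V\cap(\F_2^n\times\{(0,0)\}))< m$. In the first case, since $\dim\bigl(V\cap(\F_2^n\times\{(0,0)\})\bigr)\geq m$ and $U\neq U'$, one of the sets $V\cap(\F_2^n\times\{(0,0)\})\setminus U$ or $V\cap(\F_2^n\times\{(0,0)\})\setminus U'$ must be nonempty; correspondingly one finds $\mathbf{a}',\mathbf{b}'$ with $D_{\mathbf{a}'}D_{\mathbf{b}'}f_1\not\equiv 0$ or $D_{\mathbf{a}'}D_{\mathbf{b}'}f_3\not\equiv 0$, and substituting $z_1=0$ or $z_1=1$ in Eq.~\eqref{eq1: the main} produces a nonzero second-order derivative on $V$. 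In the second case, $V$ must contain the whole plane $\{0_n\}\times\F_2^2$, and choosing $\mathbf{a}=(0_n,1,0),\mathbf{b}=(0_n,0,1)\in V$ gives $D_{\mathbf{a}}D_{\mathbf{b}}f=1\neq 0$. Hence no $(m+1)$-dimensional $\mathcal{M}$-subspace of $f$ exists, so by Lemma~\ref{lem M-M second} $f\notin\mathcal{M}^\#$.

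The main (and really only) obstacle is the bookkeeping step of showing that $U'=L^{-1}(U)$ is the unique $\mathcal{M}_m$-subspace of $f_3$; once this is in hand, the corollary is a pure restatement of Theorem~\ref{theo bent and q bent} with ``index permutation $P$'' replaced by ``linear permutation $L$'', and no further work is needed.
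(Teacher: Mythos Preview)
Your proposal is correct and matches the paper's approach: the paper states this result as a corollary with no separate proof, relying entirely on the proof of Theorem~\ref{theo bent and q bent} with the index permutation $P$ replaced by the linear permutation $L$. Your additional remark that $U'=L^{-1}(U)$ is automatically the unique $\mathcal{M}_m$-subspace of $f_3$ is a nice observation (it shows the hypothesis on $f_1\circ L$ is in fact redundant once $f_1$ has a unique $\mathcal{M}_m$-subspace), but since the corollary already assumes this as a hypothesis, it is not strictly needed.
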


	\begin{ex}
		Consider the following permutation on $\F_2^3$ given by its algebraic normal form:
		\begin{equation*}
			\pi(y)=\begin{pmatrix}
				y_1 + y_2 + y_3 + y_2 y_3 \\
				y_2 + y_1 y_2 + y_1 y_3 \\
				y_1 y_2 + y_3
			\end{pmatrix}^T,
		\end{equation*}
		which describes the inverse function $\pi(y)=y^{-1}$ on $\F_{2^3}$. Let the linear permutation  $L\colon\F_2^6\to\F_2^6$ be given by $L(x_1,x_2,x_3,y_1,y_2,y_3)=(x_1, x_2, y_1, x_3, y_2, y_3)$. Define the bent functions $f_1,f_2,f_3,f_4\in\mathcal{B}_6$ as $f_1(x,y)=f_2(x,y)=x\cdot\pi(y)$, $f_3(x,y)=f_1(L(x,y))$ and $f_4(x,y)=f_3(x,y)+1$, for $x,y\in\F_2^3$. Note that every $f_i$ has a unique $\mathcal{M}$-subspace of dimension 3, since $\pi$ is APN (thus having the ~\eqref{eq: P1} property). The ANF of $f=f_1|| f_2||f_3||f_4 \in \B_{8}$ is given by
		\begin{polynomial}
			f(z)=z_1 z_4 + z_1 z_5 + z_2 z_5 + z_2 z_4 z_5 + z_3 z_4 z_5 + z_1 z_6 + z_3 z_6 + 
			z_2 z_4 z_6 + z_1 z_5 z_6 + z_1 z_3 z_7 + z_1 z_4 z_7 + z_2 z_3 z_5 z_7 + 
			z_2 z_4 z_5 z_7 + z_3 z_6 z_7 + z_2 z_3 z_6 z_7 + z_4 z_6 z_7 + z_2 z_4 z_6 z_7 + z_7 z_8,
		\end{polynomial}
		\noindent where $z=(z_1,\ldots,z_{8})=(x,y,z_7,z_{8})$. By Theorem \ref{theo bent and q bent} or  Corollary~\ref{cor:swapping}, $f=f_1|| f_2||f_3||f_4 \in \B_{8}$ is bent and outside  $\cM^{\#}$, which was also confirmed using Magma.
	\end{ex}

\section{$\ell$-Optimal bent functions and their construction methods} \label{sec: 6 loptimal}
In a series of recent articles~\cite{Cclass_DCC,OutsideMM,BFAExtended,BapicPasalic,Kudin2021,Kudin2022,PKP_WCC_2024}, the authors provided new design methods of bent functions $f\in\mathcal{B}_n$ outside $\cM^\#$. The latter is equivalent to the fact that the maximal dimension of an $\mathcal{M}$-subspace of $f$ (recall that this number is called the linearity index of $f$ and denoted by $\operatorname{ind}(f)$) is strictly less than $n/2$. In this section, we explain how to specify bent functions with a prescribed linearity index less than $n/2$ using the permutations with~\eqref{eq: P1} property. In this way, we  provide bent functions that can be used recursively for constructing the new ones using the following result.
\begin{cor}\label{cor:mixingtMMandoutsideMM} \cite[Corollary 33]{PPKZ2023} Let $f_1$ be an arbitrary  bent function on $\F_2^n$ in $\cM^\#$, and let $f_3$ be a bent function on $\F_2^n$ that only admits $\mathcal{M}$-subspaces of dimension strictly less than $n/2-1$. 
		Set $f_2=f_1$ and $f_4=1+f_3$. Then, $f=f_1||f_2||f_3||f_4 \in \B_{n+2}$ is a  bent function outside $\cM^\#$.
\end{cor}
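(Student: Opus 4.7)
The plan is to verify the dual bent condition and then rule out an $(n/2{+}1)$-dimensional $\mathcal{M}$-subspace of $f$ using the second-order derivative formula~\eqref{eq:2ndderiv_conc correct}. Since $f_2=f_1$ and $f_4=1+f_3$, the dual bent condition $f_1^*+f_2^*+f_3^*+f_4^*=1$ becomes $f_1^*+f_1^*+f_3^*+(1+f_3^*)=1$, which holds identically in $\F_2$; hence $f$ is bent.

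For the second assertion, assume for contradiction that there exists an $(n/2{+}1)$-dimensional subspace $V\subseteq\F_2^{n+2}$ with $D_aD_bf=0$ for all $a,b\in V$, so that $V$ witnesses $f\in\cM^\#$ via Lemma~\ref{lem M-M second}. Set $W=\F_2^n\times\{(0,0)\}$ and $V_0=V\cap W$. The standard dimension inequality yields
\[
\dim V_0 \;\geq\; \dim V + \dim W - (n+2) \;=\; n/2-1.
\]
Under our choice of the $f_i$'s, we have $f_{12}=f_1+f_2=0$ and $f_{1234}=f_1+f_2+f_3+f_4=1$, so every term of \eqref{eq:2ndderiv_conc correct} involving a derivative of $f_{12}$ or $f_{1234}$ vanishes identically. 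Taking $a=(a',0,0)$ and $b=(b',0,0)$ in $V_0$ collapses the formula to
\[
D_aD_bf(x,y_1,y_2)\;=\;D_{a'}D_{b'}f_1(x)\;+\;y_1\,D_{a'}D_{b'}(f_1+f_3)(x).
\]
Viewing the right-hand side as a polynomial in $y_1$, its vanishing forces both coefficients to be zero, i.e., $D_{a'}D_{b'}f_1\equiv 0$ and $D_{a'}D_{b'}f_3\equiv 0$.

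Projecting $V_0$ onto its first coordinate is an injective linear map whose image is a subspace $V_0'\subseteq\F_2^n$ with $\dim V_0'=\dim V_0\geq n/2-1$, and the previous step shows $V_0'$ is simultaneously an $\mathcal{M}$-subspace of $f_1$ and of $f_3$. The inequality $\dim V_0'\geq n/2-1$ contradicts the hypothesis that every $\mathcal{M}$-subspace of $f_3$ has dimension strictly less than $n/2-1$, so no such $V$ exists and $f$ is outside $\cM^\#$. The main technical points are the careful simplification of \eqref{eq:2ndderiv_conc correct} under $f_{12}=0,\, f_{1234}=1$ and the sharp dimension count that already pushes $V_0'$ beyond the allowed threshold; note that the assumption $f_1\in\cM^\#$ is used only to supply a valid bent building block for the concatenation and plays no role in the exclusion argument itself.
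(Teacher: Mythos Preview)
Your proof is correct. The paper does not include its own proof of this corollary (it is cited from~\cite[Corollary 33]{PPKZ2023}), but your argument follows exactly the template used in the paper for the closely related Theorem~\ref{theo bent and q bent}: verify the dual bent condition, intersect a hypothetical $(n/2{+}1)$-dimensional $\mathcal{M}$-subspace with $\F_2^n\times\{(0,0)\}$, and use the derivative formula~\eqref{eq:2ndderiv_conc correct} together with the dimension bound to force a contradiction with the linearity-index hypothesis on $f_3$. Your closing observation that the hypothesis $f_1\in\cM^\#$ is not used in the exclusion argument is also correct.
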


As mentioned in Subsection~\ref{sub: 2.1 MM}, for a Boolean bent function $f\in\mathcal{B}_n$, its \textit{linearity index} satisfies $1\le \operatorname{ind}(f)\le n/2$. Moreover, $\operatorname{ind}(f)=n/2$ if and only if $f\in\mathcal{M}^\#$. In view of these observations, it is natural to consider bent functions with the minimal linearity index as the ``counterparts'' of bent functions from the $\mathcal{M}^\#$ class.

\begin{defi}
	Let $f\in\mathcal{B}_n$ be bent. If $\operatorname{ind}(f)=1$, we say that $f$ is \textit{$\ell$-optimal}, i.e., $f$ has optimal linearity index. 
\end{defi}

In the remaining part of this section, we provide constructions of $\ell$-optimal bent functions using permutations with~\eqref{eq: P1} property and consider their 4-decomposition.

\subsection{$\ell$-Optimality and $\cM$-subspaces of $\cD_0$ functions}\label{sub: 6.1. l-optimality and D0}
In this section, we analyze the possibility of specifying bent functions with the lowest possible linearity index, i.e. $\ell$-optimal bent functions. The following result will be useful for our purpose, where  we use $\mathbbm{1}_W(x)$ to denote the indicator function of the subspace $W$ (thus $\mathbbm{1}_W(x)=1$, if $x \in W$ and zero otherwise).

\begin{lemma} \label{lem: indder} Let $V$ and $W$ be two vector subspaces of $\F_2^m$. If $V \cap W = \{ 0_m \}$, then $$D_V \mathbbm{1}_W(x)= \mathbbm{1}_{V \oplus W}(x), \text{ for all } x \in \F_2^m.$$ 
If $V \cap W \neq \{ 0_m \}$, then $D_V \mathbbm{1}_W(x)=0$, for all $x \in \F_2^m$.
\end{lemma}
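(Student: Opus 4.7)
The plan is to reduce the derivative to a counting argument. First, I recall that for a $k$-dimensional subspace $V = \langle v_1, \ldots, v_k \rangle$, expanding $D_{v_1} \cdots D_{v_k} f$ with the definition given in Section~\ref{sec:pre} yields the basis-free expression
\[
D_V f(x) = \sum_{v \in V} f(x+v) \pmod 2,
\]
which in particular shows $D_V f$ is well-defined as a function of the subspace $V$. Applying this with $f = \mathbbm{1}_W$, and noting that a sum of $\{0,1\}$-values modulo $2$ is the parity of their count, I obtain
\[
D_V \mathbbm{1}_W(x) = \#\{v \in V : x+v \in W\} \bmod 2 = |V \cap (W + x)| \bmod 2,
\]
where I use $-x = x$ in characteristic two to rewrite $\{v \in V : v \in W - x\}$ as $V \cap (W+x)$.

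Next I analyze the set $V \cap (W+x)$. If it is nonempty, pick any $v_0$ in it; then $v \in V \cap (W+x)$ iff $v \in V$ and $v - v_0 \in W$ (since $v_0 - x \in W$), iff $v - v_0 \in V \cap W$. Thus
\[
V \cap (W+x) = v_0 + (V \cap W) \quad \text{whenever it is nonempty},
\]
so its cardinality is either $0$ or $|V \cap W|$.

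Now I split into the two cases. Suppose $V \cap W = \{0_m\}$, so $|V \cap W| = 1$. Then $|V \cap (W+x)|$ is $0$ or $1$, and it equals $1$ iff there exists $v \in V$ with $v + x \in W$, which happens iff $x \in V + W = V \oplus W$. Hence $D_V \mathbbm{1}_W(x) = \mathbbm{1}_{V \oplus W}(x)$, as required. Suppose instead $V \cap W \neq \{0_m\}$; then $|V \cap W|$ is even (a nontrivial $\F_2$-subspace), so $|V \cap (W+x)|$ is always even, and therefore $D_V \mathbbm{1}_W(x) = 0$ for every $x$.

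There is no real obstacle: the only nontrivial step is recognizing that the iterated derivative of an indicator is a parity count, after which the coset/parity argument is immediate. I expect the write-up to be essentially the three lines above, with the basis-free formula $D_V f(x) = \sum_{v \in V} f(x+v)$ either stated with a brief induction on $\dim V$ or simply invoked as a standard fact.
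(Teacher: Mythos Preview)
Your proof is correct and slightly different in organization from the paper's. Both proofs rest on the same standard identity $D_V f(x)=\sum_{v\in V}f(x+v)$, but they apply it to different targets: the paper writes $\mathbbm{1}_W=\sum_{w\in W}\delta_0(\cdot+w)$ and, in the trivial-intersection case, pushes $D_V$ through to $\delta_0$ to obtain the double sum $\sum_{w\in W}\sum_{v\in V}\delta_0(x+v+w)=\mathbbm{1}_{V\oplus W}(x)$, while the nontrivial-intersection case is handled separately by picking $v_1\in V\cap W\setminus\{0_m\}$ and noting $D_{v_1}\mathbbm{1}_W=0$. You instead apply the identity directly to $\mathbbm{1}_W$, reducing the whole lemma to the parity of $|V\cap(W+x)|$ and the single observation that this set, when nonempty, is a coset of $V\cap W$. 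Your route is a bit more uniform (one computation covers both cases) and avoids the delta-function decomposition; the paper's route makes the vanishing in the nontrivial case slightly more transparent at the cost of treating the two cases by different mechanisms.
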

\begin{proof}
Assume first that $V \cap W \neq \{ 0_m \}$ and let $v_1$ be a nonzero vector in $V \cap W$. Let $\{v_1, \ldots ,v_k \}$ be a basis for $V$ containing the vector $v_1$. Since $v_1$ is also in $W$, we have that $x$ is in $W$ if and only if $x+v_1$ is in $W$, for all $x \in \F_2^m$, and consequently $\mathbbm{1}_W(x)=\mathbbm{1}_W(x+v_1)$, for all $x \in \F_2^m$. Hence, 
$$D_V \mathbbm{1}_W(x)= D_{v_n} \cdots D_{v_2}D_{v_1} \mathbbm{1}_W(x)= D_{v_n} \cdots D_{v_2}(\mathbbm{1}_W(x)+\mathbbm{1}_W(x+v_1)) = D_{v_n} \cdots D_{v_2}(0)=0,$$
for all $x \in \F_2^m$.

Assume now that $V \cap W = \{ 0_m \}$. Then, every vector $z$  in $V \oplus W$ can be represented in a unique way as $z=v+w$, for some $v \in V$ and $w \in W$. Notice that for any subspace $U$ of $\F_2^m$ we have that $\mathbbm{1}_U(x)= \sum_{u \in U} \delta_0(x+u)$, for all $x \in \F_2^m$, and so we compute
\begin{equation*} 
\begin{split}
D_V \mathbbm{1}_W(x)  = D_V \left( \sum_{w \in W} \delta_0(x+w) \right) & = \sum_{w \in W} D_V \left( \delta_0(x+w) \right) = \sum_{w \in W} \sum_{v \in V} \delta_0(x+v+w) \\
& = \sum_{z \in V \oplus W} \delta_0(x+z) = \mathbbm{1}_{V \oplus W }(x),
\end{split}
\end{equation*}
for all $x \in \F_2^m$, and this concludes the proof.
\end{proof}

As a consequence of Lemma~\ref{lem: indder}, we have that for any two linearly independent vectors $a,b$ in $\F_2^m$ the second-order derivative of the indicator function $\delta_0=\mathbbm{1}_{0_m}$ (in the direction governed by $a$ and $b$) is $D_aD_b\delta_0=\mathbbm{1}_{\langle a,b \rangle}$, where $\langle a,b \rangle$ is the subspace of $\F_2^m$ generated by $a$ and $b$, i.e., $\langle a,b \rangle = \{ 0_m, a, b, a+b \}$.  To consider $\mathcal{M}$-subspaces of bent functions from $\cD_0$ class introduced by Carlet~\cite{CC93}, we need the following definitions introduced in \cite{Polujan2020}.
\begin{defi}\cite{Polujan2020}\label{defPolujanandPott2020}
		A vector subspace $U  \subseteq \F_2^n$ is called a relaxed $\mathcal{M}$-subspace of a Boolean function $f\in \mathcal{B}_n$, if for all $a,b \in U$ the second-order derivatives $D_{a}D_bf$ are either constant zero or constant one functions, i.e., $D_{a}D_bf=0$ or $D_{a}D_bf=1$. We denote by $\mathcal{RMS}_r(f)$ the collection of all $r$-dimensional relaxed $\mathcal{M}$-subspaces of a Boolean function $f$ and by $\mathcal{RMS}(f)$ the collection $\mathcal{RMS}(f):=\bigcup\limits_{r=1}^{n}\mathcal{RMS}_r(f)$.
		For a Boolean function  $f\in \mathcal{B}_n$ its relaxed linearity index $r$-ind($f$) is defined by $r$-ind($f$)$:=\max\limits_{U\in \mathcal{RMS}(f) }\dim(U).$
\end{defi} 
With this definition, and the fact that for a Boolean function $f\in\mathcal{B}_n$ it holds that $\operatorname{ind}(f)\leq\operatorname{r-ind}(f)$, see~\cite{Polujan2020}, we are ready to prove the main result of this section.
\begin{theo}\label{theo: MofDzero} Let $\pi$ be a permutation of $\F_2^m$, $m \geq 4$ which has the property \eqref{eq: P1}. Define $f\colon\F_2^{m}\times\F_2^{m} \to \F_2$ by $f(x,y)=x \cdot \pi(y) + \delta_{0}(x)$, for all $x,y \in \F_2^{m}$. 
	Then, $\operatorname{r-ind}(f)\leq 2$. Furthermore, $\operatorname{r-ind}(f)=1$, implying that $\operatorname{ind}(f)=1$, if and only if $\pi$ has no components with linear structures.
\end{theo}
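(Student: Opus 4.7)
The plan is to analyze the second-order derivatives of $f$ directly and split them into pieces that live in distinct algebraic-degree layers in the $x$-variable, so that constancy of $D_aD_bf$ forces the pieces to be constant individually.

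First I would compute, for $a=(a_1,a_2)$ and $b=(b_1,b_2)$ in $\F_2^m\times\F_2^m$, the explicit formula
\[
D_aD_bf(x,y)=x\cdot D_{a_2}D_{b_2}\pi(y)+a_1\cdot D_{b_2}\pi(y+a_2)+b_1\cdot D_{a_2}\pi(y+b_2)+D_{a_1}D_{b_1}\delta_0(x).
\]
By Lemma~\ref{lem: indder}, the last term equals $\mathbbm{1}_{\langle a_1,b_1\rangle}(x)$ when $a_1,b_1$ are linearly independent and $0$ otherwise. The crucial observation, and the main technical pressure point of the argument, is the degree split in the variable $x$: the term $x\cdot D_{a_2}D_{b_2}\pi(y)$ has degree at most $1$ in $x$, whereas $\mathbbm{1}_{\langle a_1,b_1\rangle}(x)$ has degree $m-2\geq 2$ (this is exactly where $m\geq 4$ is used). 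Hence no cancellation between the two is possible.

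Let $U$ be a relaxed $\mathcal M$-subspace and fix $a,b\in U$. The constancy (in $x$) of $D_aD_bf(x,y)$ forces both the linear and the degree $m-2$ part in $x$ to be individually constant. This gives two consequences:
\begin{itemize}
\item[(i)] $a_1$ and $b_1$ must be linearly dependent (else $\mathbbm{1}_{\langle a_1,b_1\rangle}$ is a non-constant function of $x$);
\item[(ii)] $D_{a_2}D_{b_2}\pi(y)=0_m$ for all $y$, so by property \eqref{eq: P1} the vectors $a_2,b_2$ must be linearly dependent.
\end{itemize}
Writing $p_1,p_2$ for the two projections, (i)–(ii) give $\dim p_1(U)\leq 1$ and $\dim p_2(U)\leq 1$, and since $U\subseteq p_1(U)\times p_2(U)$ we obtain $\dim U\leq 2$. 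This establishes $\operatorname{r-ind}(f)\leq 2$.

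For the second assertion, note that if $\dim U=2$ then both projections must be $1$-dimensional and $U=\langle(\alpha,0_m),(0_m,\beta)\rangle$ for some nonzero $\alpha,\beta\in\F_2^m$. Plugging $a=(\alpha,0_m)$, $b=(0_m,\beta)$ into the formula above, every term vanishes except $\alpha\cdot D_\beta\pi(y)=D_\beta\pi_\alpha(y)$, where $\pi_\alpha$ is the component function of $\pi$ corresponding to $\alpha$. Constancy of this expression in $y$ is exactly the statement that $\beta$ is a nonzero linear structure of $\pi_\alpha$. Hence a $2$-dimensional relaxed $\mathcal M$-subspace exists if and only if some component $\pi_\alpha$ admits a nonzero linear structure. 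Combined with the trivial lower bound $\operatorname{ind}(f)\leq \operatorname{r-ind}(f)$ (from the inclusion $\mathcal{MS}(f)\subseteq\mathcal{RMS}(f)$) and $\operatorname{ind}(f)\geq 1$, this yields the equivalence $\operatorname{r-ind}(f)=1\Longleftrightarrow \operatorname{ind}(f)=1\Longleftrightarrow$ no component of $\pi$ has a nonzero linear structure. The main obstacle is setting up the degree-separation argument cleanly, i.e.\ making rigorous that a degree-$1$ polynomial in $x$ and a degree-$(m-2)$ polynomial in $x$ cannot have a nonzero constant sum; once that is in place, the rest reduces to property \eqref{eq: P1} and the definition of a linear structure.
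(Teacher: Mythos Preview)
Your proof is correct and follows essentially the same approach as the paper's: compute the second-order derivative explicitly, use the degree gap in $x$ between the $\delta_0$-contribution (degree $m-2\ge 2$ when $a_1,b_1$ are independent) and the remaining terms (degree $\le 1$ in $x$) to force both coordinate projections of a relaxed $\mathcal M$-subspace to be at most one-dimensional, and then identify the existence of a $2$-dimensional relaxed $\mathcal M$-subspace with a component of $\pi$ admitting a nonzero linear structure. One minor point: your closing three-way equivalence slightly overclaims, since the implication $\operatorname{ind}(f)=1\Rightarrow\operatorname{r-ind}(f)=1$ is not actually established by your argument (a component with only a $1$-linear structure yields a $2$-dimensional \emph{relaxed} $\mathcal M$-subspace without obviously producing a genuine $\mathcal M$-subspace), but this extra implication is not part of the theorem and the statement as given is fully proved.
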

\begin{proof}
	Let $V$ be an $\cM$-subspace of $f$. Let $L_V \to \F_2^{m}$ be the projection $L(x,y)=y$, for all $(x,y) \in V$.
		In general, denoting $a=(a_1,a_2)$ and $b=(b_1,b_2)$ in $\F_2^{2m}$, we have
	\begin{equation} \label{eq: secder2}
	D_{(a_1, a_2)}D_{(b_1, b_2)}f(x,y)
	=x\cdot\left( D_{a_2}D_{b_2}\pi(y)\right) +a_1\cdot D_{b_2}\pi(y+ a_2)
	+ b_1 \cdot D_{a_2}\pi(y+ b_2) 
	+ D_{a_1}D_{b_1}\delta_0(x). 
	\end{equation}
	Since for linearly independent $a_1,b_1$, by Lemma \ref{lem: indder}, we have  $D_{a_1}D_{b_1}\delta_0 = \mathbbm{1}_{\langle a_1,b_1 \rangle}$, then the algebraic degree of $D_{a_1}D_{b_1}\delta_0(x)$ is $m-2 \geq 2$. On the other hand, if $a_1,b_1$ are linearly dependent then $D_{a_1}D_{b_1}\delta_0=0$.  Thus,  to force  $D_{(a_1, a_2)}D_{(b_1, b_2)}f$ to be zero, from Eq. \eqref{eq: secder2}, we deduce that it has to be the case that $x\cdot\left( D_{a_2}D_{b_2}\pi(y) \right) = 0$, that is $D_{a_2}D_{b_2} \pi =0_m$. Because the permutation $\pi$ has the \eqref{eq: P1} property, we conclude that $\dim (Im(L)) \leq 1$.
	
	For $(a_1,0_m)$ and $(b_1,0_m)$ in $Ker(L)$, we have $D_{(a_1, 0_m)}D_{(b_1, 0_m)}f(x,y) = D_{a_1}D_{b_1}\delta_0(x)$, and hence the subspace $\langle a_1,b_1 \rangle$ is at most $1$-dimensional, (otherwise the algebraic degree of $D_{a_1}D_{b_1}\delta_0(x)$ is $m-2$, so $D_{(a_1, 0_m)}D_{(b_1, 0_m)}f \neq 0$, a contradiction). Consequently, we deduce that $\dim(Ker(L)) \leq 1$, and from the rank-nullity theorem, it follows that $\dim(V)= \dim(Ker(L)) + \dim(Im(L)) \leq 1+1=2$.
	
	 Assume now that $a=(a_1,a_2)$ and $b=(b_1,b_2)$ are two linearly independent vectors from $V$. 
	From the first part of the proof, we know that $a_2$ and $b_2$ are linearly dependent, and because $D_{(a_1, a_2)}D_{(b_1, b_2)}f= D_{(a_1, a_2)+(b_1, b_2)}D_{(b_1, b_2)}f$, we can without loss of generality assume that $a_2=0_m$. From Eq. \eqref{eq: secder2}, it follows that 
	$
	D_{(a_1, 0_m)}D_{(b_1, b_2)}f(x,y)
	= a_1\cdot D_{b_2}\pi(y) 
	+ D_{a_1}D_{b_1}\delta_0(x),
	$
	hence we can also deduce that $a_1$ and $b_1$ are linearly dependent (similarly as above, otherwise the algebraic degree of $D_{a_1}D_{b_1}\delta_0(x)$ is $m-2 \geq 2$, so $D_{(a_1, 0_m)}D_{(b_1, b_2)}f$ is not a constant function, a contradiction). Since $a_1 \neq 0_m$, and because $D_{(a_1, 0_m)}D_{(b_1, b_2)}f= D_{(a_1, 0_m)}D_{(a_1, 0_m)+(b_1, b_2)}f$, we can without loss of generality assume that $b_1=0_m$. This means that for every $2$-dimensional relaxed $\cM$-subspace $V$ of $f$ we can find a basis $\{a,b \}$ for $V$ of the form $a=(a_1,0_m)$ and $b=(0_m, b_2)$. From Eq. \eqref{eq: secder2}, it follows that 
	$D_{(a_1, 0_m)}D_{(0_m, b_2)}f(x,y) =a_1\cdot D_{b_2}\pi(y),$
	and consequently, $a_1\cdot D_{b_2}\pi(y)$ is a constant function, hence the component $a_1 \cdot \pi$ of $\pi$ has a nonzero linear structure $b_2$. On the other hand, if $\pi$ has a component $a_1 \cdot \pi$, $a_1\neq 0_m$,  with a nonzero linear structure $b_2$, then it follows from Eq. \eqref{eq: secder2} that the subspace $\langle (a_1,0_m), (0_m,b_2) \rangle$ is a $2$-dimensional relaxed $\cM$-subspace of $f$.
\end{proof}

Theorem~\ref{theo: MofDzero} provides a method for obtaining functions  satisfying $\operatorname{r-ind}(f)< n/2$, that is, it gives a solution to~\cite[Open Problem 1]{ZPBW23}. We notice that the bent functions within $\cD_0$ can satisfy $\ell$-optimality, provided that a permutation $\pi$ satisfies the \eqref{eq: P1} property and additionally the components of $\pi$ do not admit linear structures. Therefore, we have the following corollary related to Theorem \ref{theo: MofDzero}.

\begin{cor}\label{cor:loptimal} Let $\pi$ be a permutation of $\F_2^m$, $m \geq 4$ which has the property \eqref{eq: P1} and additionally satisfies the condition that $ a_1 \cdot  D_{b_2}\pi(y) \neq 0$, for any nonzero $a_1,b_2 \in \F_2^m$.  Then, the bent function $f(x,y)=x \cdot \pi(y) + \delta_{0}(x)$, where $x,y \in \F_2^{m}$, is outside 
	$\cM^\#$ and also $\ell$-optimal since its $\operatorname{r-ind}(f)=1$. In particular, the same is true if  $\deg(\pi)>2$ and $\pi$ is a monomial permutation satisfying \eqref{eq: P1}.
	
\end{cor}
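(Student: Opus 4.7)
The plan is to deduce this directly from Theorem~\ref{theo: MofDzero}. Because $\pi$ satisfies \eqref{eq: P1}, that theorem already gives $\operatorname{r-ind}(f)\le 2$, and its ``furthermore'' part identifies precisely when equality $\operatorname{r-ind}(f)=2$ is avoided: no nonzero component $a_1\cdot\pi$ may admit a nonzero linear structure. The extra hypothesis $a_1\cdot D_{b_2}\pi(y)\neq 0$ for every nonzero $a_1,b_2\in\F_2^m$ is the quantitative form of this absence-of-component-linear-structures property, since a linear structure $b_2$ of the component $a_1\cdot\pi$ would force $a_1\cdot D_{b_2}\pi$ to be a constant function on $\F_2^m$. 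Combining these gives $\operatorname{r-ind}(f)=1$, and since $1\le\operatorname{ind}(f)\le\operatorname{r-ind}(f)$ holds for any bent function, we obtain $\operatorname{ind}(f)=1$, i.e., $f$ is $\ell$-optimal. As $\operatorname{ind}(f)=1<m=n/2$, Lemma~\ref{lem M-M second} then yields $f\notin\cM^\#$.

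For the ``in particular'' part, take $\pi(y)=y^d$ on $\F_{2^m}$ with $\deg(\pi)>2$ satisfying \eqref{eq: P1}. Because $\pi$ is a permutation, $\gcd(d,2^m-1)=1$, so $y\mapsto y^d$ is a bijection on $\F_{2^m}^\ast$; for each $\alpha\in\F_{2^m}^\ast$ there is a unique $\gamma\in\F_{2^m}^\ast$ with $\gamma^d=\alpha$, and $Tr(\alpha y^d)=Tr((\gamma y)^d)$. Thus every nontrivial component of $\pi$ is related to the single component $Tr(y^d)$ by the linear bijection $y\mapsto\gamma y$, and the collections of linear structures of the components all correspond to one another under these bijections. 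It therefore suffices to rule out nonzero linear structures for the single component $Tr(y^d)$, after which the first assertion of the corollary applies.

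The main obstacle is exactly this last step. Suppose toward contradiction that $\beta\in\F_{2^m}^\ast$ is a linear structure of $Tr(y^d)$, so $Tr((y+\beta)^d+y^d)=c$ for every $y$ and some $c\in\F_2$. Differentiating once more in a direction $a\in\F_{2^m}$ gives $Tr\bigl(D_aD_\beta\pi(y)\bigr)=0$ for all $y\in\F_{2^m}$. For any $a\notin\langle\beta\rangle$, the polynomial $D_aD_\beta\pi$ is nonzero by \eqref{eq: P1}, yet it takes values only in the hyperplane $\ker Tr$. The plan is to combine this information with the monomial homogeneity identity
\[
D_{\lambda a}D_{\lambda\beta}\pi(\lambda y)=\lambda^d\,D_aD_\beta\pi(y),\qquad \lambda\in\F_{2^m}^\ast,
\]
applied across a system of scalings $\lambda$, to derive enough independent $\F_2$-linear conditions on the coefficients of $\pi$ to either collapse $D_aD_\beta\pi$ to zero (contradicting \eqref{eq: P1}) or force $\deg(\pi)\le 2$ (contradicting the degree hypothesis). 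Once this step is settled, the first part of the corollary immediately gives $f$ is $\ell$-optimal and lies outside $\cM^\#$.
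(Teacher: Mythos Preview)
Your first paragraph is correct and matches the paper's own argument: the first assertion is an immediate consequence of Theorem~\ref{theo: MofDzero}, together with $1\le\operatorname{ind}(f)\le\operatorname{r-ind}(f)$ and Lemma~\ref{lem M-M second}. (A minor quibble: the hypothesis ``$a_1\cdot D_{b_2}\pi\neq 0$'' literally only excludes the identically-zero derivative, not the identically-one case, so as written it does not rule out all linear structures. But this imprecision is already present in the paper's statement of the corollary, so it is not a defect of your proof.)

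The genuine gap is in the ``in particular'' part. Your reduction of all nontrivial components of $y\mapsto y^d$ to the single component $Tr(y^d)$ via $y\mapsto\gamma y$ is correct. However, you explicitly leave the decisive step---that $Tr(y^d)$ has no nonzero linear structure when $\deg(\pi)>2$---as an unfinished ``plan'', and the route you sketch does not obviously close. The homogeneity identity $D_{\lambda a}D_{\lambda\beta}\pi(\lambda y)=\lambda^d D_aD_\beta\pi(y)$ replaces the direction $\beta$ by $\lambda\beta$, about which you have no linear-structure hypothesis, so the trace constraint $Tr(D_aD_\beta\pi)=0$ does not transfer and it is unclear how to generate the independent linear conditions you need; nor does the \eqref{eq: P1} property by itself force the conclusion (the Charpin--Kyureghyan result requires only $\deg(\pi)>2$, not \eqref{eq: P1}). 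The paper does not attempt any such argument: it simply cites~\cite{Pascale_Gohar2010}, which proves that components of monomial permutations of algebraic degree greater than~$2$ admit no nonzero linear structures, and then invokes the first part. You should either cite that result as well or supply a complete self-contained proof of it.
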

\begin{proof}
	The first part follows immediately from Theorem~\ref{theo: MofDzero}. It was shown in \cite{Pascale_Gohar2010} that the components of monomial permutations of degree $>2$ do not admit linear structures, and the second part of the statement follows. 
\end{proof}
\begin{rem}\label{rem:P2+nolinstruct}
	\emph{1.} The permutations $\pi$ that preserve the property  \eqref{eq: P1} on larger variable spaces, as in Proposition \ref{prop:gensecderlarger}, cannot satisfy the condition  that $a_1 \cdot  D_{b_2} \pi(y)\neq 0$, due to the last coordinate function of $\pi$ which is $y_{m+1}$. Thus, it is of interest to provide similar extensions preserving the property \eqref{eq: P1} without adding linear terms. \ \\
	\noindent\emph{2.} We notice that the property \eqref{eq: P2} along with the condition that $a_1 \cdot  D_{b_2}\pi(y)\neq 0$ is not sufficient for specifying $\ell$-optimal bent functions in Theorem \ref{theo: MofDzero}. Indeed, assuming that a permutation $\pi$ on $\F_2^m$ does not have \eqref{eq: P1}, then one can find two vectors $a, b \in \F_2^m$ such 
	that $D_aD_b\pi=0$.  The subspace generated by $(0_m,a)$ and $(0_m,b)$ is a 2-dimensional $\mathcal{M}$-subspace of $f(x,y)=x \cdot \pi(y) + \delta_0(x)$, and thus $f$ is not $\ell$-optimal.
\end{rem}

\begin{ex}\label{ex: 5-valued l-optimal}
	\textit{1.} In his thesis, Dillon showed that the  partial spread bent function $f(x)=Tr(x^{15})$ satisfies: $\deg(D_aD_bf)=2$ for any 2-dimensional vector space $\langle a, b \rangle$, see~\cite[pp. 102-103]{Dillon}. In turn, it means that $\operatorname{ind}(f)=1$, since $D_aD_bf=0$ iff $\dim\langle a, b \rangle\le1$, thus, this function is $\ell$-optimal. We also note that all possible 4-decompositions of these functions are 5-valued.
	
	\noindent{2.} Let $f(x,y)=Tr(xy^{-1})+\delta_0(x)$ be a bent function on $\F_{2^5}\times\F_{2^5}$. Since $y\mapsto y^{-1}$ is an APN permutation on  $\F_{2^5}$, we have that the inverse permutation has $(P_1)$ property. Moreover, the components of a monomial permutation whose degree is larger than 2 do not admit linear structures, see \cite{Pascale_Gohar2010}.    By Theorem~\ref{theo: MofDzero},  $\operatorname{r-ind}(f)=1$ and thus $f$ is $\ell$-optimal . With a computer algebra system, one can show that the multiset of degrees of the second-order derivatives of $f$ is given by:
	$$\{*\;\deg(D_aD_bf)\colon \dim\langle a,b \rangle=2\;*\}=\{*\; 3^{174\,220},\; 2^{31}  \;*\},$$
	where $3^{174\,220}$ means that there are ${174\,220}$ 2-dimensional subspaces $\langle a,b \rangle$ such that $\deg(D_aD_bf)=3$,  and similarly $2^{31}$ indicates that there are ${31}$ subspaces $\langle a,b \rangle$ such that $\deg(D_aD_bf)=2$. Notice that $$ (2^n - 1)(2^n -2)/6= 174251=174220+31,$$ when $n=10$.
	This confirms that $f$ is $\ell$-optimal.
	We also note that all possible 4-decompositions of these functions are 5-valued.
\end{ex}

The above remarks and examples lead naturally to the following research problems.

\begin{op}\label{op:P2+nolinstruc}
	Find generic constructions of permutations $\pi$ on $\F_2^m$ satisfying the \eqref{eq: P1} property whose components do not admit linear structures.
\end{op}

\begin{op}\label{op:distributionderiv}
	Provide theoretical estimates on the value distribution of the multiset of derivatives (for a varying degree of $D_aD_bf$) 
	$$\{*\;\deg(D_aD_bf)\colon \dim\langle a,b \rangle=2\;*\},$$
	which is an interesting but challenging research task. 
\end{op}

\subsection{4-Decomposition of bent functions in $\cD_0$}\label{sub: 6.2. 4-deom and D0}
Any bent function $f\in\mathcal{B}_{n+2}$ has $(2^{n+2}-1)(2^{n+2}-2)/6$ different 4-decompositions $f = (f_1, f_2, f_3, f_4)_V$ into bent, semi-bent, or 5-valued spectra functions $f_i\in\mathcal{B}_{n}$, where $V=\langle a,b \rangle^\perp$; these different 4-decompositions correspond to  2-dimensional subspaces $\langle a,b\rangle$ of $\F_2^{n+2}$. To the best of our knowledge, the only known $\ell$-optimal bent functions (apart from those constructed in the previous subsection) are monomials $Tr(\lambda x^k)$ on $\F_{2^n}$, where $n,\lambda$ and $k$ are suitably chosen, see~\cite[Lemma 3]{Decom}. More precisely, Charpin and Canteaut in~\cite[Theorem 10]{Decom} proved that  monomials $Tr(\lambda x^k)$ on $\F_{2^n}$, where $n,\lambda$ and $k$ are chosen as in~\cite[Lemma 3]{Decom} have neither  bent nor semi-bent 4-decompositions,  hence all  $(2^n - 1)(2^n -2)/6$ decompositions are 5-valued. More $\ell$-optimal bent functions with such properties were given in Example~\ref{ex: 5-valued l-optimal}. 

It is well-known \cite{Decom} that  a bent function $f\in\mathcal{B}_n$ has only 5-valued spectra decompositions if and only if  $D_aD_b f^*\neq const.$, for all 2-dimensional subspaces $\langle a,b\rangle$, and the latter is equivalent to the fact  that $\operatorname{r-ind}(f^*)=1$, which implies that $\operatorname{ind}(f^*)=1$, that is,  $f^*$ is $\ell$-optimal. In view of this conclusion and the discussion above, it is natural to conjecture  that an $\ell$-optimal bent function only has 5-valued 4-decompositions. In Theorem~\ref{theo:5valueddecomp}, we show that certain infinite families of bent functions indeed have this property. However, in Remark~\ref{rem: not 5-valued only} we indicate that this statement is not true in general.

\begin{theo}\label{theo:5valueddecomp}
	Let $m \geq 4$ and $\pi$ be a monomial  APN permutation on $\F_2^m$, with $\deg(\pi) >2$, which induces an $\ell$-optimal bent function $f(x,y)=x\cdot \pi(y)+\delta_0(x)$ on $\F_2^m\times\F_2^m$. Then, provided that $\deg(\pi^{-1}) >2$, its dual is also an $\ell$-optimal bent function and allows only 5-valued spectra decompositions. In particular, the inverse function $\pi(y)=y^{-1}$ on $\F_2^m$ (where by convention $\pi(0)=0$) is an example of such a permutation. 
\end{theo}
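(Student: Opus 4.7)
The plan is to compute the dual $f^*$ explicitly, recognize it (up to swapping the two blocks of variables) as another $\mathcal{D}_0$ function built from $\pi^{-1}$, and then apply Corollary~\ref{cor:loptimal} to $\pi^{-1}$. That will give $\ell$-optimality of $f^*$; the ``only 5-valued decompositions'' claim will then drop out of the characterization recalled just before the theorem together with Corollary~\ref{cor:loptimal} applied to $\pi$ itself.

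For the dual, I start from
\[ W_f(u,v) = \sum_{x,y \in \F_2^m} (-1)^{x \cdot \pi(y) + \delta_0(x) + u \cdot x + v \cdot y}, \]
split off the $x = 0_m$ contribution (which yields $-2^m \delta_0(v)$, because $\delta_0(0_m)=1$), and complete the remaining sum over $x \neq 0_m$ to a sum over all of $\F_2^m$. The completed sum collapses, via the substitution $y' = \pi(y)$ and the orthogonality relation $\sum_x (-1)^{x \cdot z} = 2^m \delta_0(z)$, to $2^m (-1)^{v \cdot \pi^{-1}(u)}$. Putting the pieces together and separating the cases $v = 0_m$ and $v \neq 0_m$ yields
\[ f^*(u,v) = v \cdot \pi^{-1}(u) + \delta_0(v), \]
so that $f^*(u,v) = g(v,u)$ where $g(x,y) = x \cdot \pi^{-1}(y) + \delta_0(x) \in \mathcal{D}_0$ is built from $\pi^{-1}$. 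Swapping the two blocks of variables is a linear permutation, hence $\operatorname{r-ind}(f^*) = \operatorname{r-ind}(g)$ by the same argument as for the ordinary linearity index.

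Next I verify the hypotheses of Corollary~\ref{cor:loptimal} for $g$: the permutation $\pi^{-1}$ has the property~\eqref{eq: P1} by Remark~\ref{rem:inverseP1} (the inverse of an APN permutation is APN, which implies~\eqref{eq: P1}), and no component of $\pi^{-1}$ admits a linear structure because $\pi^{-1}$ is a monomial of algebraic degree strictly greater than $2$ (this is where the extra hypothesis $\deg(\pi^{-1}) > 2$ enters, via the result of \cite{Pascale_Gohar2010} already cited in Corollary~\ref{cor:loptimal}). The corollary then gives $\operatorname{r-ind}(g) = 1$, hence $\operatorname{r-ind}(f^*) = 1$ and in particular $\operatorname{ind}(f^*) = 1$, so $f^*$ is $\ell$-optimal. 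For the second conclusion, the characterization recalled above the theorem asserts that a bent function $h$ admits only 5-valued spectra decompositions iff $\operatorname{r-ind}(h^*) = 1$; applied with $h = f^*$ this reduces to $\operatorname{r-ind}(f) = 1$, which is immediate by applying Corollary~\ref{cor:loptimal} directly to $\pi$ (its hypotheses are all part of the theorem's statement). Finally, for the ``in particular'' clause, $y \mapsto y^{-1}$ is self-inverse with $\deg(y^{-1}) = m - 1 \geq 3$ for $m \geq 4$, so the two degree conditions collapse into one.

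I expect the dual computation to be the main obstacle: correctly tracking the isolated $x = 0_m$ term, and the boundary case $v = 0_m$ it creates, is what produces the extra $\delta_0(v)$ summand in $f^*$ and identifies it as a $\mathcal{D}_0$ function built from $\pi^{-1}$; once that identification is in hand, the rest of the argument is a direct application of Corollary~\ref{cor:loptimal} together with the 5-valued characterization.
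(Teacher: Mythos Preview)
Your proposal is correct and follows essentially the same route as the paper's proof: identify the dual as $f^*(u,v)=v\cdot\pi^{-1}(u)+\delta_0(v)$, recognize it (up to the swap $(u,v)\mapsto(v,u)$) as a $\mathcal{D}_0$ function built from the monomial APN permutation $\pi^{-1}$, and then invoke Corollary~\ref{cor:loptimal}. The only difference is that the paper cites \cite[p.~211]{Carlet2021} for the dual formula whereas you derive it by a direct Walsh-transform computation; your handling of the 5-valued claim (reducing it to $\operatorname{r-ind}(f)=1$ via $(f^*)^*=f$ and a second application of Corollary~\ref{cor:loptimal} to $\pi$) is in fact slightly more explicit than the paper's one-line conclusion.
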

\begin{proof}
The dual of $f(x,y)=x\cdot \pi(y)+\delta_0(x)$,  is given by $f^*(x,y)=y\cdot\pi^{-1}(x)+\delta_0(y)$, see \cite[p. 211]{Carlet2021}. Since the inverse 
of an APN function is also APN \cite{CCZ1998}, then $\pi^{-1}(x)$ is also a monomial APN permutation and $f^*\in \cD_0$. Since we have assumed that $\deg(\pi^{-1}) >2$, then the components of $\pi^{-1}$ do not admit linear structures, see \cite{Pascale_Gohar2010}. Hence,  $\operatorname{r-ind}(f^*)=1$ by Corollary \ref{cor:loptimal}, which is equivalent to the property of admitting 5-valued spectra decompositions only. 

\end{proof}
\begin{rem}\label{rem: not 5-valued only}
	However, if $\deg(\pi^{-1}) =2$ in Theorem \ref{theo:5valueddecomp} then the components of $\pi^{-1}$ admit linear structures and $f^*(x,y)=y\cdot\pi^{-1}(x)+\delta_0(y)$ is not $\ell$-optimal, hence $f$ has 4-decompositions that are different from 5-valued ones. For instance, taking 
	$\pi(y)=y^{21}$ on $\F_2^5$ to specify $f(x,y)=x\cdot \pi(y)+\delta_0(x)$, it can be easily verified that $\pi^{-1}(x)=x^{3}$ and the function $f^*(x,y)=y\cdot\pi^{-1}(x)+\delta_0(y)$ is not $\ell$-optimal, which implies that $f$ does not admit 5-valued spectra decompositions only.
\end{rem} 

It is easy to see that any APN permutation (including the inverse $y\in\F_{2^m}\mapsto y^{-1}$, for $m$ odd) has the~\eqref{eq: P1} property. In the following statement, we indicate that for $m$ even, the inverse permutation has the~\eqref{eq: P1}  property as well.
\begin{prop}
	Let $m$ be even and $\pi(y)=y^{-1}$ be the inverse permutation on $\F_{2^m}$. Then, $\pi$ has the~\eqref{eq: P1} property.
\end{prop}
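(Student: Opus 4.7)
The plan is to compute $D_a D_b \pi(y)$ explicitly as a rational expression in $y$ and to read off non-vanishing from its numerator, for any $\F_2$-linearly independent pair $a, b \in \F_{2^m}$. I would first restrict attention to the generic range $y \in \F_{2^m} \setminus \{0, a, b, a+b\}$, so that all four terms appearing in
$$D_a D_b \pi(y) = (y+a+b)^{-1} + (y+a)^{-1} + (y+b)^{-1} + y^{-1}$$
are honest multiplicative inverses and the convention $\pi(0) = 0$ plays no role in the evaluation.

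Next I would combine the terms pairwise. The pair $(y+a+b)^{-1} + (y+a)^{-1}$ simplifies to $b/[(y+a)(y+a+b)]$, and $(y+b)^{-1} + y^{-1}$ simplifies to $b/[y(y+b)]$. Adding these two fractions and simplifying the combined numerator in characteristic~$2$ (the cross-terms telescope, leaving $a^2 + ab = a(a+b)$) is expected to yield the clean identity
$$D_a D_b \pi(y) = \frac{ab(a+b)}{y(y+a)(y+b)(y+a+b)} \qquad \text{for } y \notin \{0, a, b, a+b\}.$$
Since $a$ and $b$ are linearly independent over $\F_2$, each of $a$, $b$, $a+b$ is nonzero, so the numerator $ab(a+b)$ is a nonzero element of $\F_{2^m}$. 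For $m \geq 4$ (the regime of interest throughout Section~\ref{sec: 6 loptimal}; the case $m=2$ is degenerate since $y^{-1} = y^2$ is then $\F_2$-linear), the complement $\F_{2^m} \setminus \{0, a, b, a+b\}$ is nonempty, and at any such $y$ the value $D_a D_b \pi(y)$ is therefore a nonzero element of $\F_{2^m}$. This shows that $D_a D_b \pi$ is not identically zero, which is precisely the property~\eqref{eq: P1}.

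There is no real obstacle: the argument reduces to a single rational-function identity together with a size count of $\F_{2^m}$. The only care required is to handle the convention $\pi(0)=0$, which is circumvented by restricting to generic $y$, and to observe that $\F_2$-linear independence of $a, b$ makes all three of $a$, $b$, $a+b$ nonzero so that the numerator $ab(a+b)$ cannot vanish. Note that the odd $m$ case would also follow from this same identity (consistently with the fact that the inverse is APN for odd $m$), so the argument gives a uniform treatment of both parities when $m \geq 3$.
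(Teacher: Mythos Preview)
Your argument is correct and more self-contained than the paper's. The paper invokes an external structural result \cite[Theorem III.3]{Li20} classifying the \emph{vanishing flats} of the inverse map: it quotes that $D_aD_b\pi(y)=0$ has either exactly the four solutions $\{0,a,b,a+b\}$ (when $\langle a,b\rangle$ lies in a specific family $\mathcal{VB}_{m,-1}$ of $2$-dimensional subspaces) or no solutions at all, and in either case $D_aD_b\pi$ is not identically zero. You instead derive the explicit identity
\[
D_aD_b\pi(y)=\frac{ab(a+b)}{y(y+a)(y+b)(y+a+b)},\qquad y\notin\{0,a,b,a+b\},
\]
which in fact \emph{reproves} the cited fact (the zero set is visibly contained in $\{0,a,b,a+b\}$) and immediately yields~\eqref{eq: P1} once $|\F_{2^m}|>4$. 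Your route avoids the external reference entirely, works uniformly for all $m\ge 3$ regardless of parity, and makes transparent why $m=2$ must be excluded (both because $\pi$ is then $\F_2$-linear and because no generic $y$ exists). The paper's approach, by contrast, packages the computation into a black-box citation but gives slightly more information about \emph{which} $2$-dimensional subspaces $\langle a,b\rangle$ actually attain four zeros of $D_aD_b\pi$.
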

\begin{proof}
	Recall that by ~\cite[Theorem III.3]{Li20}, we have that $D_{a,b}\pi(y)=\pi(y)+\pi(y+a)+\pi(y+b)+\pi(y+a+b)=0$ for $\{y,y+a,y+b,y+a+b\}\in\mathcal{VB}_{m,-1}$, which is defined by $\mathcal{VB}_{m,-1}:=\left\{ \left\{0,\alpha ^i,\zeta  \alpha ^i,\zeta ^2 \alpha ^i\right\} \colon 0\le i \le \frac{2^m-4}{3}  \right\}$, where $\alpha$ is a primitive element of $\F_{2^m}$ and $\zeta=\alpha^\frac{2^m-1}{3}$. 
	 Then, clearly $D_{a,b}\pi(y)=0$ has four solutions $y\in \{0,a,b,a+b\}$ if and only if $\langle a,b \rangle \in \mathcal{VB}_{m,-1}$, and zero solutions otherwise. Consequently, $D_aD_b\pi$ is not the constant zero function for all linearly independent  $a,b\in\F_{2^m}$, and hence the inverse permutation $\pi$ on $\F_{2^m}$ has the~\eqref{eq: P1} property, when $m$ is even as well.
\end{proof}

To conclude this section, we propose the following open problem about $\ell$-optimal bent functions.

\begin{op} \label{op:loptimaldualaswell}
Specify other generic classes of  $\ell$-optimal bent functions $f\in\mathcal{B}_n$. Particularly,  find $\ell$-optimal bent functions $f\in\mathcal{B}_n$ such that   $f^*\in\mathcal{B}_n$ is also $\ell$-optimal.
\end{op}

\section{Conclusion and open problems}\label{sec: 7 concl}
In this article, we have  further refined the two important properties of permutations, the so-called \eqref{eq: P1} and  \eqref{eq: P2} properties, which are used in the construction of bent functions in $\cM$ that admit a unique $\mathcal{M}$-subspace of maximal dimension. We generalize the constructions methods of such permutations compared to \cite{PPKZ2023}, which is useful in the context of extending the design methods of bent functions that are provably outside $\cM^\#$.  Additionally, we generalize the so-called ``swapping variables'' method introduced in \cite{PPKZ2023} which then allows us to easily specify much larger families of bent functions outside $\cM^\#$ compared to \cite{PPKZ2023}.  We also specify $\ell$-optimal bent functions within the $\cD_0$ class whose linearity index is the lowest possible. Such bent functions can be employed in certain secondary constructions of bent functions \cite{ZPBW23} for providing further classes of bent functions that are provably outside $\cM^\#$. Moreover, we demonstrate that a certain subclass of $\cD_0$ has an additional property of having only 5-valued spectra decompositions, similarly to the only result in this direction given in \cite{Decom}.

There are several open problems that are left unanswered, where  in particular generic constructions of permutations that satisfy  the \eqref{eq: P1} property 
and whose components do not admit linear structures is of great importance for specifying other $\ell$-optimal bent functions.

\section*{Acknowledgments} 
Enes Pasalic is supported in part by the Slovenian Research Agency (research program P1-0404
and research projects J1-4084 and J1-60012). Sadmir Kudin is supported in part by the Slovenian Research Agency (research program P1-0404 and research project J1-60012).  Fengrong Zhang is supported by in part by   the Natural Science Foundation of China under Grant (62372346),  in part by the Higher Education Discipline Innovation Introduction Plan (B16037).




\end{document}